\numberwithin{equation}{section}
\newtheorem{theorem}{Theorem}[section]
\newtheorem{corollary}[theorem]{Corollary}
\newtheorem{lemma}[theorem]{Lemma}
\newtheorem{proposition}[theorem]{Proposition}
\theoremstyle{definition}
\newtheorem{definition}[theorem]{Definition}
\newtheorem{remark}[theorem]{Remark}
\newtheorem{assumption}[theorem]{Assumption}
\newcommand{\I}{\mathtt{i}}
\newcommand{\RR}{\mathbb{R}}
\newcommand{\PP}{\mathbb{P}}
\newcommand{\II}{\mathbb{I}}
\newcommand{\EE}{\mathbb{E}}
\newcommand{\VV}{\mathbb{V}}
\newcommand{\Cc}{\mathcal{C}}
\newcommand{\Mm}{\mathcal{M}}
\newcommand{\mm}{\mathfrak{m}}
\newcommand{\Oo}{\mathcal{O}}
\newcommand{\Ii}{\mathcal{I}}
\newcommand{\Tt}{\mathcal{T}}
\newcommand{\NN}{\mathbb{N}}
\newcommand{\D}{\mathrm{d}}
\newcommand{\xx}{\mathrm{x}}
\newcommand{\yy}{\mathrm{y}}
\newcommand{\E}{\mathrm{e}}
\newcommand{\Bs}{\mathscr{B}}
\newcommand{\Es}{\mathscr{E}}
\newcommand{\LTwo}{\mathrm{L}^2}
\newcommand{\Ff}{\mathscr{F}}
\newcommand{\BV}{\mathrm{BV}}
\newcommand{\eps}{\varepsilon}
\newcommand{\zxy}{\mathrm{z}^{x}_{y}}
\newcommand{\zxyS}{\mathrm{z}^{x^*}_{y^*}}
\newcommand{\zxyO}{\mathrm{z}^{x_1}_{y_1}}
\newcommand{\zxyT}{\mathrm{z}^{x_2}_{y_2}}
\newcommand{\be}{\begin{equation}}
\newcommand{\ee}{\end{equation}}
\def\blue#1{\textcolor{black}{#1}}
\begin{document}
\title{Pathwise large deviations for the Rough Bergomi model}

\author{Antoine Jacquier}
\address{Department of Mathematics, Imperial College London}
\email{a.jacquier@imperial.ac.uk}

\author{Mikko S. Pakkanen}
\address{Department of Mathematics, Imperial College London and CREATES, Aarhus University}
\email{m.pakkanen@imperial.ac.uk}

\author{Henry Stone}
\address{Department of Mathematics, Imperial College London}
\email{henry.stone15@imperial.ac.uk}

\date{\today}

\keywords{Rough volatility, large deviations, small-time asymptotics, Gaussian measure, reproducing kernel Hilbert space.}
\subjclass[2010]{Primary 60F10, 60G22; Secondary 91G20, 60G15.}
\thanks{
AJ acknowledges financial support from the EPSRC First Grant EP/M008436/1. HS thanks the EPSRC CDT in Financial Computing and Analytics for financial support. MSP acknowledges partial support from CREATES (DNRF78), funded by the Danish National Research Foundation.
}
\maketitle
\begin{abstract}
Introduced recently in mathematical finance by Bayer, Friz and Gatheral~\cite{BFG16}, 
the rough Bergomi model has proved particularly efficient to calibrate option markets.
We investigate here some of its probabilistic properties, in particular proving a pathwise large deviations principle for a small-noise version of the model.
The exponential function (continuous but superlinear) as well as the drift appearing in the volatility process
fall beyond the scope of existing results, and a dedicated analysis is needed.
\end{abstract}


\section{Introduction}
The extension of the Black-Scholes model, in which volatility is assumed to be constant, to the case where the volatility is stochastic has proved successful in explaining certain phenomena observed in option price data, 
in particular the implied volatility smile. 
The main shortcoming of such stochastic volatility models, however, is that they are unable to capture the true steepness of the implied volatility smile close to maturity. 
While choosing to add jumps to stock price models, for example modelling the stock price process as an exponential L\'evy process, does indeed produce steeper implied volatility smiles~\cite{FF12}, 
the question of the presence of jumps in stock price processes remains controversial~\cite{BST15,COP14}.

As an alternative to exponential L\'evy and classical stochastic volatility  models, one may choose a fractional Brownian motion, or a process with similar fine properties, to drive the volatility process, rather than a standard Brownian motion. Since volatility is neither directly observable nor tradable, the issue of arbitrage that is sometimes associated to fractional Brownian motion does not arise in this case.
A fractional Brownian motion is a centred Gaussian process, whose covariance structure depends on a Hurst parameter $H\in (0,1) $. 
If $H\in (0,1/2)$, then the fractional Brownian motion has negatively correlated increments and `rough' sample paths, and if $H \in (1/2, 1)$ then it has positively correlated increments and `smooth' sample paths, 
when compared to a standard Brownian motion, which is recovered by taking $H=1/2$.
There has been a resurgent interest in fractional Brownian motion and related processes within the mathematical finance community in recent years. 
In particular, Gatheral, Jaisson and Rosenbaum~\cite{GJR14} carried out an empirical study that suggests that the log volatility behaves at short time scales in a manner similar to a fractional Brownian motion, with Hurst parameter $H \approx 0.1$.
This finding is corroborated by Bennedsen, Lunde and Pakkanen~\cite{BLP16}, who study over a thousand individual US equities and find that the Hurst parameter~$H$ lies in $(0, 1/2)$ for each equity.
In addition, such so-called `rough' volatility models are able to capture the observed steepness of small-time implied volatility smiles and the term structure of at-the-money skew much better than classical stochastic volatility models. 

Following~\cite{GJR14}, Bayer, Friz and Gatheral~\cite{BFG16} propose the so-called rough Bergomi model, which they used to price options on integrated volatility and on the underlying itself. 
The advantage of their model is that it captures the rough behaviour of log volatility, 
in accordance with~\cite{BLP16, GJR14}, 
as well as fits observed implied volatility smiles better than traditional Markovian stochastic volatility models, most notably in the close-to-maturity case. The works \cite{AlosLeon, Fukasawa1, Fukasawa2} 
study the short-time behaviour of rough volatility models. 
Despite recent advances in simulation methods for the rough Bergomi model~\cite{BLP15,JMM17}, 
it is necessary to seek a more profound analytical understanding of the properties of this model. 
Specifically, in the present paper we prove pathwise large deviations for this model,
which allow to characterise its small-time behaviour.
Related results have been recently obtained by 
Bayer, Friz, Gulisashvili, Horvath and Stemper~\cite{BFGHS17},
and large deviations theory is now a common tool for such analysis
in standard stochastic volatility models~\cite{DFJV, FJ09, LDPBook, JKRM, JR}, 
and for their rough counterparts~\cite{BFGHS17, FZ15}.

In Section~\ref{Section: model and main results}, we present the correlated rough Bergomi model, together with its main properties, and lay out the main results of the paper; specifically a small-time large deviations principle for a rescaled version of the normalised process.
In Section~\ref{sec:GaussLDP}, we present several elements from the theory of Gaussian measures and large deviations that are required to prove the main results of the paper.
In Section~\ref{Section: proof of main results}, we give the proofs of the main results,
and Section~\ref{sec: uncorrel} elucidates the analogous large deviations result for the uncorrelated rough Bergomi model.  

\textbf{Notations:}
The notation $\LTwo:=L^2(\Tt, \RR)$ stands for the space of real-valued square integrable functions on some index set~$\Tt$,
and $\Cc^d := \Cc(\Tt, \RR^d)$ the space of $\RR^d$-valued continuous functions on~$\Tt$.
\blue{We shall fix $\Tt = [0,1]$ for the rest of this paper, although our results can be easily adapted for general interval $[0,T]$.}
We shall further denote~$\BV$ the space of paths of finite variations on~$\Tt$, 
and $\RR_+:=[0,\infty)$.
For two paths~$x$ and~$y$ belonging to~$\Cc = \Cc^1$, we denote by~$\zxy$ the two-dimensional path 
$(x,y)^\top$.
Now, $\II(\zxy)(t)$ represents the integral (whenever well-defined) $\int_0^{t} \sqrt{x(s)}\D y(s)$;
the expression~$\II(\zxy)$ shall be used whenever the integral is taken over the whole time period~$[0,1]$,
 and $x\cdot y := \int_{0}^{1} x(s) \D y(s)$.

\section{Model and main results}\label{Section: model and main results}
We assume a given filtered probability space $(\Omega, \mathscr{A}, (\Ff_t )_{t \ge 0}, \PP)$, 
where the filtration satisfies the usual conditions, and all stochastic processes here will be assumed to live on this probability space.

\subsection{Rough Bergomi Model and main properties}

Bayer, Friz and Gatheral~\cite{BFG16} introduce a non-Markovian generalisation of Bergomi's 
`second generation' stochastic volatility model, 
 which they dub `rough Bergomi' model. 
Let~$Z$ be the process defined pathwise as 
\begin{equation}\label{eq:SDEZ}
Z_t := \int_0^t K_\alpha(s,t)\D W_s,
\qquad\text{for any }t\geq 0,
\end{equation}
where $\alpha \in \left(-\frac{1}{2},0\right)$, $W$ a standard Brownian motion, 
and where the kernel 
$K_{\alpha}:\RR_+\times\RR_+ \to \RR_+$ reads
\begin{equation}\label{eq:K}
K_{\alpha}(s,t) := \eta \sqrt{2\alpha + 1}(t-s)^{\alpha}, 
\qquad \text{for all } 0\leq s<t,
\end{equation}
for some strictly positive constant~$\eta$.
Note that, for any $t\geq 0$, the map $s\mapsto K_\alpha(s,t)$ belongs to~$\LTwo$,
so that the stochastic integral~\eqref{eq:SDEZ} is well defined.
The rough Bergomi model, where $X$ is the log stock price process and $v$ is the volatility process, is then defined as
\begin{equation}\label{rBergomi}
\begin{array}{rll}
X_t & = \displaystyle - \frac{1}{2} \int_0^t v_s \D s + \int_0^t \sqrt{ v_s } \D B_s,
 \quad &  X_0 = 0 , \\
v_t &= \displaystyle v_0
\exp \left( Z_t -\frac{\eta^2}{2}t^{2 \alpha +1}  \right),
\quad & v_0 > 0,
\end{array}
\end{equation}
where the Brownian motion~$B$ is defined as $B := \rho W + \sqrt{1-\rho^2}W^\perp$ for $\rho \in [-1,1]$
and some standard Brownian motion~$W^\perp$ independent of $W$. 
The filtration~$(\Ff_t)_{t\geq 0}$ can here be taken as the one generated by the two-dimensional Brownian motion 
$(W,W^\perp)$.

\begin{proposition}\label{prop: (Z,B) properties}
The two-dimensional Gaussian process $(Z,B)$ is centred with covariance structure 
\begin{align*}
\mathrm{cov}\left(\begin{pmatrix}Z_t\\B_t\end{pmatrix}, \begin{pmatrix}Z_t\\B_t\end{pmatrix}\right)
& = 
\begin{pmatrix}
\eta^2 t^{2\alpha +1 } & \varrho t^{\alpha + 1} \\
\varrho t^{\alpha + 1} & t \\
\end{pmatrix},\\
\EE(Z_s Z_t) & = \int_0^{s \wedge t} K_{\alpha}(u,s)K_{\alpha}(u,t)\D u 
 = \frac{\eta^2 (2\alpha +1)}{\alpha + 1} (s \wedge t )^{1+ \alpha} (s \vee t )^{\alpha}{}_2 F_1 \left(1, -\alpha, 2 + \alpha, \frac{s \wedge t}{ s \vee t  } \right), 
\end{align*}
for any $s,t\geq 0$, with $ \varrho :=\frac{\rho \eta \sqrt{2\alpha +1}}{\alpha + 1}$
and ${}_2 F_1$ the Gauss hypergeometric function~\cite[Chapter~5, Section~9]{Olver}. 
\end{proposition}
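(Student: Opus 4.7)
The proof is a direct but somewhat delicate computation built on three ingredients: the Itô isometry, the fact that Wiener integrals of deterministic kernels produce centred Gaussian random variables, and the Euler integral representation of the Gauss hypergeometric function.

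First I would observe that $Z_t = \int_0^t K_\alpha(s,t)\D W_s$ and $B_t = \rho W_t + \sqrt{1-\rho^2}\,W^\perp_t$ both lie in the Gaussian space generated by the two-dimensional Brownian motion $(W,W^\perp)$; consequently $(Z,B)$ is a centred two-dimensional Gaussian process, and it suffices to compute the entries of the covariance matrix. For the diagonal entry of $Z_t$, Itô's isometry yields
\be
\EE(Z_t^2) = \int_0^t K_\alpha(s,t)^2 \D s = \eta^2(2\alpha+1)\int_0^t (t-s)^{2\alpha}\D s = \eta^2 t^{2\alpha+1},
\ee
while $\EE(B_t^2)=t$ is immediate. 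For the off-diagonal entry, the independence of $W$ and $W^\perp$ together with Itô's isometry gives
\be
\EE(Z_t B_t) = \rho\, \EE\!\left(\int_0^t K_\alpha(s,t)\D W_s \cdot \int_0^t \D W_s\right) = \rho\int_0^t K_\alpha(s,t)\D s = \rho\eta\sqrt{2\alpha+1}\,\frac{t^{\alpha+1}}{\alpha+1} = \varrho\, t^{\alpha+1}.
\ee

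For the final identity, assume without loss of generality that $s\le t$. By Itô's isometry,
\be
\EE(Z_s Z_t) = \int_0^{s\wedge t} K_\alpha(u,s)K_\alpha(u,t)\D u = \eta^2(2\alpha+1)\int_0^{s}(s-u)^\alpha (t-u)^\alpha \D u.
\ee
The substitution $u = sv$ turns this into $\eta^2(2\alpha+1)\, s^{\alpha+1} t^\alpha \int_0^1 (1-v)^\alpha \bigl(1-\tfrac{s}{t}v\bigr)^\alpha \D v$. The remaining task, and the only non-routine step, is to recognise this Euler-type integral as a value of ${}_2F_1$. Using the integral representation
\be
{}_2F_1(a,b;c;z) = \frac{\Gamma(c)}{\Gamma(b)\Gamma(c-b)}\int_0^1 r^{b-1}(1-r)^{c-b-1}(1-zr)^{-a}\D r,
\ee
valid for $\mathrm{Re}(c)>\mathrm{Re}(b)>0$, with the choice $b=1$, $c=\alpha+2$, $a=-\alpha$ and $z=s/t$, one obtains $\int_0^1(1-v)^\alpha(1-\tfrac{s}{t}v)^\alpha\D v = \frac{1}{\alpha+1}\,{}_2F_1(1,-\alpha;2+\alpha;s/t)$, which, combined with the prefactor and rewritten symmetrically using $s\wedge t$ and $s\vee t$, gives exactly the claimed formula.

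The only genuine obstacle is the last hypergeometric identification, which requires matching exponents carefully (in particular being aware that $\alpha\in(-1/2,0)$ so that $c-b-1=\alpha>-1$ keeps the integral well defined). Everything else reduces to the Itô isometry and Gaussianity of Wiener integrals.
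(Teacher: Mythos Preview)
Your proof is correct and follows essentially the same route as the paper: It\^o isometry, the substitution $u=sv$, and Euler's integral representation of ${}_2F_1$ with parameters $(a,b,c)=(-\alpha,1,\alpha+2)$. You are in fact more thorough than the paper's proof, which treats only the hypergeometric identity for $\EE(Z_sZ_t)$ and leaves the remaining covariance entries (and the Gaussianity claim) implicit.
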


\begin{proof}
Without loss of generality, assume first that $s<t$. This then implies that
$\EE(Z_s Z_t) = \eta^2 (2\alpha + 1) \int_0^s(t-u)^\alpha (s-u)^\alpha \D u
=
t^\alpha s^{1+\alpha} \int^1_0 (1-v)^\alpha (1 - sv/t)^\alpha \D v,
$
where the second equality follows from a change of variables.
Using Euler's integral representation of the Gauss hypergeometric function ${}_2 F_1$, it follows that 
$ \int_0^s(t-u)^\alpha (s-u)^\alpha \D u
=
\frac{1}{\alpha+1} {}_2 F_1 (-\alpha, 1 ; \alpha + 2; s/t ),
$ 
from which the proposition follows.
\end{proof}

Proposition \ref{prop: (Z,B) properties} implies in particular that the process $Z$ is not stationary, and that the following holds:
\begin{corollary}
The process $Z$ is 
$(\alpha + \frac{1}{2})$ self-similar: for any $a>0$,
the processes $(Z_{at})_{t \ge 0}$ and $(a^{\alpha + \frac{1}{2}} Z_t)_{t \ge 0}$ are equal in distribution.
\end{corollary}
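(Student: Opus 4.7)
The plan is to verify the distributional equality by matching the covariance functions of the two processes, since both are centered Gaussian and the law of such a process is determined by its covariance.

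First, I would note that $Z$ is centered Gaussian, being a Wiener integral of a deterministic kernel. Consequently, both $(Z_{at})_{t\geq 0}$ (a deterministic time change) and $(a^{\alpha+1/2}Z_t)_{t\geq 0}$ (a deterministic scalar multiple) are centered Gaussian processes. It therefore suffices to check, for every $s,t\geq 0$ and every $a>0$, the scaling relation
\[
\EE(Z_{as}Z_{at}) \;=\; a^{2\alpha+1}\,\EE(Z_s Z_t).
\]

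Second, I would use the integral representation of the covariance given in Proposition~\ref{prop: (Z,B) properties}, namely $\EE(Z_s Z_t)=\int_0^{s\wedge t} K_\alpha(u,s) K_\alpha(u,t)\D u$, and perform the change of variable $u=av$ in the expression for $\EE(Z_{as}Z_{at})$. Two scalings then combine: the upper limit contracts from $a(s\wedge t)$ to $s\wedge t$, the kernel satisfies $K_\alpha(av,at)=a^{\alpha}K_\alpha(v,t)$ by direct inspection of~\eqref{eq:K}, and the Jacobian contributes one extra factor of $a$. Collecting terms yields exactly the factor $a^{2\alpha+1}=a^{2(\alpha+1/2)}$.

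There is no substantive obstacle here; the argument is a routine scaling computation on the covariance. Alternatively, one could obtain the same relation by applying the Brownian scaling $W_{at}\stackrel{d}{=}a^{1/2}W_t$ inside the stochastic integral~\eqref{eq:SDEZ} and changing variables, which gives the self-similarity at the level of processes directly rather than via two-point covariances; I would include a brief remark to this effect for completeness.
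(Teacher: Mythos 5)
Your proposal is correct and follows essentially the same route the paper intends: the paper states the corollary as an immediate consequence of the covariance structure in Proposition~\ref{prop: (Z,B) properties} (where the closed form $\frac{\eta^2(2\alpha+1)}{\alpha+1}(s\wedge t)^{1+\alpha}(s\vee t)^\alpha\,{}_2F_1(1,-\alpha,2+\alpha,\tfrac{s\wedge t}{s\vee t})$ is manifestly $(2\alpha+1)$-homogeneous, since the hypergeometric argument is a scale-invariant ratio), and your change-of-variables computation on the integral form of the covariance, together with the observation that equality of covariances suffices for centred Gaussian processes, is a clean and complete way to make that implication explicit.
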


Note then that the parameter~$\alpha$ determines both the local and long-term behaviour of~$Z$.

\begin{remark}
The process~$Z$ in~\eqref{eq:SDEZ} is the Holmgren-Riemann-Liouville fractional Brownian motion introduced 
by L\'evy~\cite{Levy}, modulo some constant scaling, rather than the more commonly known fractional Brownian motion
characterised by Mandelbrot and Van Ness~\cite[Definition 2.1]{MN68} as
$$
W^H_t = \frac{1}{\Gamma(H+1/2)} 
\left( \int_{- \infty }^0 ( (t-s)^{H-1/2} - (-s)^{H-1/2} ) \D \widetilde{W}_s
+ \int_0^t (t-s)^{H-1/2}\D \widetilde{W}_s \right),
$$
where~$\widetilde{W}$ is a standard Brownian motion, and~$\Gamma$ the standard Gamma function.
The Mandelbrot-Van Ness representation of $W^H_t$ requires the knowledge of~$\widetilde{W}$ from~$-\infty$ to~$t$; 
in comparison we only need to know~$W$ from~$0$ to~$t$ to compute the value of~$Z$.
Also both~$Z$ and~$W^H$ are self-similar, but~$W^H$ has stationary increments whereas the increments of $Z$ are non-stationary.
\end{remark}

\begin{proposition} 
The process $\log v$ has a modification whose trajectories are almost surely locally $\gamma$-H\"older continuous,
for all $\gamma \in \left(0, \alpha + \frac{1}{2} \right)$.
\end{proposition}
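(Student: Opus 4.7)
The plan is to first reduce the statement to a Hölder regularity result for the Gaussian process~$Z$ alone, and then invoke Kolmogorov's continuity criterion. From \eqref{rBergomi} one has $\log v_t = \log v_0 + Z_t - \tfrac{\eta^2}{2}\, t^{2\alpha+1}$, and since $2\alpha+1 = 2(\alpha + \tfrac{1}{2}) > \alpha + \tfrac{1}{2}$, the deterministic term $t \mapsto t^{2\alpha+1}$ is globally $(2\alpha+1)$-Hölder on $[0,1]$, hence in particular $\gamma$-Hölder for every $\gamma \in (0, \alpha+\tfrac{1}{2})$. It therefore suffices to construct a modification of~$Z$ enjoying the claimed Hölder regularity.

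The key technical step is to estimate the second moment of the increments of~$Z$. By the It\^o isometry applied to \eqref{eq:SDEZ},
\[
\mathbb{E}\bigl[(Z_t - Z_s)^2\bigr] = \int_0^s \bigl[K_\alpha(u,t) - K_\alpha(u,s)\bigr]^2 \D u + \int_s^t K_\alpha(u,t)^2 \D u, \qquad 0 \leq s < t.
\]
The second integral equals $\eta^2 (t-s)^{2\alpha+1}$ by a direct computation. For the first, the substitution $u = s - (t-s)y$ transforms it into
\[
\eta^2 (2\alpha+1)\, (t-s)^{2\alpha+1} \int_0^{s/(t-s)} \bigl[(1+y)^\alpha - y^\alpha\bigr]^2 \D y,
\]
and the integrand has an integrable singularity of order $y^{2\alpha}$ at the origin (which requires $\alpha > -\tfrac{1}{2}$) and decays like $y^{2\alpha-2}$ at infinity (which uses $\alpha < 0$). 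Hence $\int_0^\infty [(1+y)^\alpha - y^\alpha]^2 \D y < \infty$, yielding $\mathbb{E}[(Z_t-Z_s)^2] \leq C\, (t-s)^{2\alpha+1}$ uniformly in $0 \leq s < t \leq 1$.

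Once this second-moment bound is in hand, Gaussianity of $Z_t - Z_s$ upgrades it to a bound on arbitrary even moments: for every integer $p \geq 2$, $\mathbb{E}[|Z_t - Z_s|^p] \leq C_p\, (t-s)^{p(\alpha+1/2)}$. Kolmogorov's continuity criterion then yields a modification of $Z$ whose paths are almost surely locally $\gamma$-Hölder continuous for every $\gamma < \alpha + \tfrac{1}{2} - 1/p$; sending $p \to \infty$ covers the full range $\gamma \in (0, \alpha + \tfrac{1}{2})$ and, combined with the first paragraph, concludes the proof.

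I expect the main obstacle to be the estimate on $\int_0^s [K_\alpha(u,t) - K_\alpha(u,s)]^2 \D u$: extracting the correct power of $(t-s)$ from the difference of two singular kernels requires the change of variables above together with the integrability analysis at both endpoints of $(0, \infty)$, and this is precisely where the constraint $\alpha \in (-\tfrac{1}{2}, 0)$ enters, through both the local and the tail behaviour of the transformed integrand.
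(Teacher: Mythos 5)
Your proof is correct and follows essentially the same route as the paper: the same split of $\EE[(Z_t-Z_s)^2]$, the same change of variables to extract $(t-s)^{2\alpha+1}$, Kolmogorov's criterion for $Z$, and a deterministic Hölder estimate for $t\mapsto t^{2\alpha+1}$. The only (welcome) difference is that you spell out the upgrade from second to $p$-th moments via Gaussianity before invoking Kolmogorov, a step the paper leaves implicit.
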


\begin{proof}
We first prove that~$Z$ has a modification whose trajectories are $\gamma$-H\"older continuous, for all $\gamma \in (0, \alpha + \frac{1}{2})$.
Firstly, 
$\EE(\vert Z_t - Z_s\vert^2) \le \eta^2(2\alpha + 1)  
\left( 
\int_s^t \vert t-u \vert ^{2\alpha} \D u 
+ \int_0^s \vert (t-u)^\alpha - (s-u)^\alpha \vert^2 \D u 
\right) 
=
\eta^2 \vert t-s\vert^{2\alpha + 1} + \eta^2(2\alpha + 1)\int_0^s \vert (t-u)^\alpha - (s-u)^\alpha \vert^2 \D u.  $ 
Following the change of variables $s-u = (t-s)y$ the integral becomes
$\vert t-s \vert^{2\alpha+1} \int^{\frac{s}{t-s}}_0 \vert (y+1)^\alpha - y^\alpha \vert ^2 \D y$, 
and hence $\int^{\frac{s}{t-s}}_0 \vert (y+1)^\alpha - y^\alpha \vert ^2 \D y$ is finite
since $\alpha \in (-\frac{1}{2},0)$.
Therefore there exists $K>0$ such that 
$\EE(\vert Z_t - Z_s\vert^2) \le K \vert t-s \vert^{2\alpha+1}$ for any $s,t\ge 0$.
Applying the Kolmogorov continuity theorem \cite[Theorem 3.22]{Kal02} 
then yields that the Gaussian process~$Z$ has a modification whose trajectories are locally $\gamma$-H\"older continuous where $\gamma \in (0, \alpha+ \frac{1}{2})$, thus proving the claim.
Now, for the process $\log v$, we have
\begin{align*}
\vert \log v_t - \log v_s \vert 
 & = \left|Z_t - \frac{\eta^2 }{2}t^{2 \alpha +1} -\left( Z_s - \frac{\eta^2}{2} s^{2 \alpha +1}\right)\right|\\
 & \le \vert Z_t -Z_s \vert + \frac{\eta^2}{2} \left| t^{2 \alpha + 1} - s^{ 2\alpha + 1}\right|
\le C \vert t-s \vert^{\gamma} + \frac{\eta^2}{2} \left|t^{2 \alpha + 1} - s^{ 2\alpha + 1}\right|,
\end{align*}
where $C$ is a strictly positive constant, and $\gamma \in (0, \alpha + 1/2)$. 
Since the map $t \mapsto t^{2 \alpha +1}$ is also locally $\gamma$-H\"older continuous 
for all $\gamma \in (0,2 \alpha + 1]$ and in particular for all $\gamma \in (0, \alpha + 1/2)$,
it follows that the process $ \log v $ has a modification with locally $\gamma$-H\"older continuous trajectories, for all $ \gamma \in (0, \alpha + \frac{1}{2})$.
\end{proof}

As a comparison, the fractional Brownian motion has sample paths that are $\gamma$-H\"older continuous 
for any $\gamma \in (0,H)$~\cite[Theorem 1.6.1]{BHOZ08}, so that
the rough Bergomi model also captures this roughness by identification $\alpha= H - 1/2$;
in particular these trajectories are rougher than those of the standard Brownian motion, for which $H=1/2$.


\subsection{Main results}

For any functions $\varphi_1, \varphi_2:\RR_+\times\RR_+\to\RR$, 
introduce the $\LTwo $ operators $\Ii^{\varphi_1}$ and $\Ii^{\varphi_1}_{\varphi_2} $ defined as 
\begin{equation}\label{eq:IiOper}
\Ii^{\varphi_1} f :=\int_0^{\cdot} \varphi_1(u,\cdot)f(u)\D u
\qquad \text{and } \qquad 
\Ii^{\varphi_1}_{\varphi_2} f := 
\begin{pmatrix}
\displaystyle \Ii^{\varphi_1} f\\
\displaystyle \Ii^{\varphi_2} f
\end{pmatrix}.
\end{equation}
Whenever the function~$\varphi$ is constant, say equal to~$c$, we shall write $\Ii^c$ without ambiguity.
We also define the space 
$\mathscr{H}^{\varphi_1}_{\varphi_2} := \left\{\Ii^{\varphi_1}_{\varphi_2} f: f \in \LTwo\right\}$
that
is clearly a Hilbert space once endowed with the inner product
$\left\langle \Ii^{\varphi_1}_{\varphi_2}f_1, \Ii^{\varphi_1}_{\varphi_2} f_2\right\rangle_{\mathscr{H}^{\varphi_1}_{\varphi_2}} := \langle f_1, f_2 \rangle_{\LTwo}$, 
where $\varphi_{1},\varphi_{2}$ are such that~$\Ii^{\varphi_1}_{\varphi_2}$ is injective, 
making the inner product well-defined.
For $t, \eps \ge 0$, now define the rescaled processes
\begin{equation}\label{def: rescaled rBerg processes}
X^{\eps}_t :=  \eps^{\beta}X_{\eps t}, \qquad
Z^{\eps}_t :=  \eps^{\beta /2}Z_t \overset{\textrm{d}}= Z_{\varepsilon t},  \qquad
v^{\eps}_t :=  \eps^{1 + \beta } v_0 \exp \left( Z^{\eps}_t -\frac{\eta^2}{2} (\eps t)^{\beta}  \right), \qquad
B^{\eps}_t :=  \eps^{\beta /2 }B_t,
\end{equation} 
where $\beta := 2 \alpha + 1 \in (0,1)$.
Note in particular that, for any $t,\eps\geq 0$, $Z^{\eps}_t$ and $Z_{\eps t}$ are equal in law,
and so are~$v^{\eps}_{t}$ and~$\eps^{1 + \beta} v_{\eps t}$,
which in turn implies that the following representation holds for any $t\geq 0$:
\begin{align}\label{eq:SDEXeps}
X^{\eps}_{t} & :=  \eps^{\beta}X_{\eps t}
\overset{\textrm{d}} = \eps^{\beta } \left( \int_0^{\eps t} \sqrt{v_s} \D B_s -\frac{1}{2} \int_0^{\eps t} v_s \D s \right) 
\overset{\textrm{d}} =  \eps^{\beta } \left( \int_0^{t} \sqrt{v_{\eps s  }   }\D B_{\eps s}
 - \frac{\eps}{2} \int_0^{t} v_{\eps s} \D s  \right) \nonumber\\
& \overset{\textrm{d}}  =  \int_0^{t} \sqrt{ \eps^{ 1 + 2 \beta } v_ {\eps s }  } \D B_s -\frac{1}{2} \int_0^{t} \eps^{ 1 + \beta } v_{ \eps s } \D s
\overset{\textrm{d}} = \int_0^{t} \sqrt{v^{\eps  }_s  } \D B^{\eps }_s -\frac{1}{2} \int_0^{t} v^{\eps  }_s \D s.
\end{align}

We now state the main result of this section, namely a pathwise large deviations principle 
for the sequence of rescaled processes~$(X^{\eps})_{\eps \ge 0 }$. 
We recall first some facts about large deviations on a real, separable Banach space $\left( \Es, \|\cdot\|_{ \Es} \right)$, following~\cite{DZ10} as our guide.

\begin{definition} 
A function  $\Lambda:\Es$ $\to [0,+ \infty]$ is said to be a rate function if it is lower semi-continuous, that is,
if, for all $\xx_0 \in \Es $,
$$
\liminf_{\xx \rightarrow \xx_0} \Lambda(\xx) \ge ~\Lambda(\xx_0).
$$
\end{definition}

\begin{definition}
A family of probability measures $(\mu_\eps)_{\eps\ge 0}$ on $(\Es, \Bs(\Es))$  
is said to satisfy a large deviations principle (LDP) as~$\eps$ tends to zero with speed $\eps^{-1}$ 
and rate function~$\Lambda$ if, for any $B \in \Bs(\Es)$,
\begin{equation}\label{eqn:LDPdefinition}
- \inf_{\xx \in B^{\circ}} \Lambda(\xx)
 \leq \liminf_{\eps \downarrow 0} \varepsilon \log \mu_\eps(B)
 \leq \limsup_{\eps \downarrow 0} \eps \log\mu_\eps(B)
 \leq - \inf_{\xx \in \overline{B}} \Lambda(\xx),
\end{equation}
where $\overline{B}$ and $B^\circ$ denote respectively the closure and the interior of~$B$.
\end{definition}

Correspondingly, a stochastic process  $(Y_\eps)_{\eps \ge 0}$ is said to satisfy a LDP as~$\eps$ tends to zero if the family of probability measures $(\PP(Y_\eps \in \cdot))_{\eps\ge 0} $ satisfies a LDP as~$\eps$ tends to zero.
Unless otherwise stated, all LDP here shall be as~$\eps$ tends to zero, so we shall drop this mention for simplicity.
To state our results, we now define the operator $\Mm: \Cc^2 \to \Cc(\Tt \times \RR_{+}, \RR_{+} \times \RR)$ as 
\begin{equation}\label{def: M operator}
(\Mm\zxy)(t, \eps)
:= \begin{pmatrix}
(\mm x)(t,\eps)\\
y(t)
\end{pmatrix}
\quad \textrm{ for all } t\in\Tt, \eps>0,
\end{equation}
where the operator $\mm:\Cc\to\Cc(\Tt \times \RR_+, \RR_+)$ is defined by
\begin{equation}\label{eq:M1}
(\mm x)(t,\eps)
:= v_0\eps^{1 + \beta}\exp\left( x(t) -  \frac{\eta^2}{2}(\eps t)^{\beta }\right),
\end{equation}
as well as the functions
$\Lambda^{*}, \Lambda: \Cc^2 \to\RR_+$ defined by
$$
\Lambda^{*}(\zxy):= \frac{1}{2}\|\zxy\|_{\mathscr{H}^{K_\alpha}_{\rho}}^2
\qquad\text{and}\qquad
\Lambda(\zxyO)
:=
\inf\Big\{ \Lambda^{*}(\zxyT): \zxyO = \Mm\zxyT \Big\}.
$$

\begin{theorem}\label{thm:rBergLDPthm}
The sequence $(X^{\eps})_{\eps \ge 0}$  satisfies a LDP on~$\Cc$ with speed $\eps^{-\beta}$ 
and rate function $\Lambda^X~:~\Cc~\to~[0, + \infty]$ defined as
$\Lambda^X(\varphi):= \inf \left\{\Lambda(\zxy): \varphi = \sqrt{x}\cdot y, 
 y\in\BV\cap\Cc\right\}$.
\end{theorem}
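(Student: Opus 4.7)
The plan is to establish Theorem~\ref{thm:rBergLDPthm} by transferring a Gaussian LDP for the driver through two successive contractions corresponding to the operators defining the model.

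\emph{Stage 1 (Gaussian LDP for $(Z^\eps, B^\eps)$).} The process $(Z,B)$ is centred Gaussian with continuous sample paths, by Proposition~\ref{prop: (Z,B) properties} and the subsequent H\"older regularity result. The first task is to identify its Cameron-Martin space on~$\Cc^2$ as $\mathscr{H}^{K_\alpha}_\rho$ with Hilbert norm inherited from~$\LTwo$, which reads off directly from the Volterra representation $Z_\cdot = \int_0^\cdot K_\alpha(s,\cdot)\D W_s$ and the decomposition $B = \rho W + \sqrt{1-\rho^2}W^\perp$: these show that $\Ii^{K_\alpha}_\rho(\LTwo)$ is precisely the RKHS of the joint Gaussian measure. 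Since $(Z^\eps, B^\eps) \equalDistrib \eps^{\beta/2}(Z,B)$, the abstract small-noise Gaussian LDP recalled in Section~\ref{sec:GaussLDP} then yields an LDP for $(Z^\eps, B^\eps)$ on~$\Cc^2$ at speed $\eps^{-\beta}$ with rate function $\Lambda^*$.

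\emph{Stage 2 (contraction to $(v^\eps, B^\eps)$).} For every fixed $\eps>0$, the operator $\Mm(\cdot,\eps)$ defined in~\eqref{def: M operator} is continuous from~$\Cc^2$ to~$\Cc^2$ in the uniform topology, because composition with the exponential is. Applying the contraction principle to the image of $(Z^\eps, B^\eps)$ under $\Mm(\cdot,\eps)$ then produces the LDP for the pair $(v^\eps, B^\eps)$ with rate function~$\Lambda$.

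\emph{Stage 3 (extended contraction to $X^\eps$).} Using~\eqref{eq:SDEXeps}, split $X^\eps = \II(\mathrm{z}^{v^\eps}_{B^\eps}) - \tfrac{1}{2}\int_0^\cdot v^\eps_s\D s$. The drift carries the prefactor $\eps^{1+\beta}$ inside~$v^\eps$ and is therefore exponentially negligible at speed $\eps^{-\beta}$, so it can be removed via an exponential equivalence argument. The stochastic integral is the delicate part because $(x,y)\mapsto\II(\zxy)$ is \emph{not} continuous on $\Cc^2$ in the uniform topology; I would approximate $B^\eps$ by piecewise-linear (or mollified) paths $B^{\eps,n}\in\BV\cap\Cc$, for which $\II$ becomes a continuous Young integral, apply the contraction principle at each level~$n$, and pass to the limit via the extended contraction principle (Dembo-Zeitouni, Theorem~4.2.23). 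The restriction $y\in\BV\cap\Cc$ appearing in the definition of~$\Lambda^X$ is precisely what these BV approximations produce with finite rate cost, and the variational formula for $\Lambda^X$ follows by composing the infima obtained in Stages~1-3.

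The crux is the exponential equivalence in Stage~3: verifying that $\II(\mathrm{z}^{v^\eps}_{B^{\eps,n}})$ is exponentially close to $\II(\mathrm{z}^{v^\eps}_{B^\eps})$ at speed $\eps^{-\beta}$, uniformly as $n\to\infty$ in a suitable sense. This reduces to sharp Gaussian tail estimates for $\int_0^\cdot\sqrt{v^\eps_s}\D(B^\eps_s - B^{\eps,n}_s)$, exploiting the $\eps^{(1+\beta)/2}$ decay of $\sqrt{v^\eps}$ against quadratic-variation estimates on $B^\eps - B^{\eps,n}$. Precisely here the superlinear (exponential) growth of the volatility functional and the non-trivial drift appearing in~$v^\eps$ obstruct a direct appeal to the existing LDP machinery of~\cite{BFGHS17, FZ15}, and a dedicated analysis is required.
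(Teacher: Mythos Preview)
Your Stages~1 and~2 coincide with the paper's argument: identify the RKHS as $\mathscr{H}^{K_\alpha}_\rho$, invoke the abstract Gaussian small-noise LDP (Theorem~\ref{thm3.4.12}) for $(Z^\eps,B^\eps)$, and contract through the continuous map~$\Mm$ to obtain the LDP for $(v^\eps,B^\eps)$ with rate~$\Lambda$. The divergence is at Stage~3. The paper does \emph{not} use the extended contraction principle with BV approximations of~$B^\eps$; instead it rewrites $\II(v^\eps,B^\eps)=\II(\eps^{2\alpha}v^\eps,\sqrt{\eps}B)$ and invokes Garcia's LDP for stochastic integrals (Theorem~\ref{GarciaLDPtheorem}). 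The point of the rewriting is that the integrator $(\sqrt{\eps}B)_{\eps\ge0}$ is uniformly exponentially tight---a documented example in~\cite{Gar06}---so Garcia's theorem delivers the LDP for the stochastic integral directly, with the restriction $y\in\BV$ already built into its rate function. The drift is then removed by a separate exponential-equivalence step using the Borell--TIS inequality, much as you also propose.

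Your alternative route via the extended contraction principle is plausible in outline, but you leave the decisive estimate open: you need
$\limsup_{n\to\infty}\limsup_{\eps\downarrow 0}\eps^\beta\log\PP\bigl(\sup_{t\in\Tt}\bigl|\int_0^t\sqrt{v^\eps_s}\,\D(B^\eps_s-B^{\eps,n}_s)\bigr|>\delta\bigr)=-\infty$
for every $\delta>0$. Your heuristic of ``exploiting the $\eps^{(1+\beta)/2}$ decay of $\sqrt{v^\eps}$'' is not enough as stated, because $\sqrt{v^\eps_s}=\eps^{(1+\beta)/2}\sqrt{v_0}\exp\bigl(\tfrac{1}{2}Z^\eps_s-\tfrac{\eta^2}{4}(\eps s)^\beta\bigr)$ and the exponential factor is unbounded; any BDG-type bound on the stochastic integral produces a quadratic variation $\int_0^1 v^\eps_s\,\D\langle B^\eps-B^{\eps,n}\rangle_s$ in which the random factor $\exp(Z^\eps_\cdot)$ must still be controlled at the exponential scale~$\eps^{-\beta}$. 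That is exactly the superlinear-growth obstruction you yourself flag in the last sentence, and the sketch does not resolve it. Garcia's UET machinery sidesteps the issue entirely by placing the smallness on the integrator $\sqrt{\eps}B$ rather than on the integrand, so the paper never has to produce tail bounds for $\int\sqrt{v^\eps}\,\D(\cdot)$ against an approximating sequence.
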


\begin{corollary}\label{LDPSmalltX}
The rescaled log stock price process $\left( t^{\beta} X_t \right)_{t \ge 0}$
satisfies a LDP on $\RR$ with speed $t^{-\beta}$ 
and rate function $\Lambda^X_1(u): = \inf \{\Lambda^X(\varphi) : \varphi(1) = u  \}$, $u \in \RR$.
\end{corollary}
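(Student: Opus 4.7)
The plan is to deduce this corollary from Theorem \ref{thm:rBergLDPthm} via a direct application of the contraction principle \cite[Theorem 4.2.1]{DZ10}, combined with the identity $X^{\eps}_1 = \eps^{\beta}X_{\eps}$ that holds by the very definition of the rescaled process in \eqref{def: rescaled rBerg processes}.

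First, I would consider the evaluation map $\pi : \Cc \to \RR$ defined by $\pi(\varphi) := \varphi(1)$. Since $\Cc = \Cc([0,1], \RR)$ is equipped with the uniform norm, the functional $\pi$ is linear and continuous (indeed, $1$-Lipschitz). By Theorem \ref{thm:rBergLDPthm}, the family $(X^{\eps})_{\eps \ge 0}$ satisfies a LDP on $\Cc$ with speed $\eps^{-\beta}$ and good rate function $\Lambda^X$, so the contraction principle yields that the family $(\pi(X^{\eps}))_{\eps \ge 0} = (X^{\eps}_1)_{\eps \ge 0}$ satisfies a LDP on $\RR$ with speed $\eps^{-\beta}$ and rate function
\begin{equation*}
\widetilde{\Lambda}(u) := \inf\left\{\Lambda^X(\varphi) : \varphi \in \Cc,\ \varphi(1) = u\right\},
\qquad u \in \RR,
\end{equation*}
which is exactly $\Lambda^X_1$.

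Second, I would translate this back to $(t^{\beta}X_t)_{t\ge 0}$. Setting $\eps = t$ in the definition $X^{\eps}_1 := \eps^{\beta}X_{\eps}$ yields $t^{\beta}X_t = X^{t}_1$ (as equality in distribution, using the identity in law derived in \eqref{eq:SDEXeps}). Substituting $\eps \leftrightarrow t$ in the LDP obtained in the previous step, and reading off the speed $\eps^{-\beta} = t^{-\beta}$, gives the claimed LDP on $\RR$ for the rescaled log stock price with rate function $\Lambda^X_1$.

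There is no real obstacle here beyond checking that the contraction principle applies, which in turn reduces to noting that $\pi$ is continuous on $\Cc$ and that the rate function $\Lambda^X$ can be pushed forward through $\pi$; the only point of minor care is that $\Lambda^X_1$ remains a bona fide rate function, i.e. lower semi-continuous with values in $[0,+\infty]$, which follows from the standard fact that the infimum of a good rate function over the closed level sets of a continuous map is itself lower semi-continuous.
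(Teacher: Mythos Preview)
Your proposal is correct and follows essentially the same route as the paper: apply the contraction principle to the pathwise LDP of Theorem~\ref{thm:rBergLDPthm} via the continuous evaluation map $\varphi\mapsto\varphi(1)$, then use the identity in law $X^{\eps}_1 = \eps^{\beta}X_{\eps}$ and relabel~$\eps$ as~$t$. The only minor remark is that the paper does not establish that~$\Lambda^X$ is a \emph{good} rate function, but since the version of the contraction principle stated in the paper (Proposition~\ref{contractionPrinciple}) does not require goodness, this does not affect the argument.
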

\begin{proof}
Since $X^{\eps}_1$ and $\eps^{\beta} X_{\eps}$ are equal in law, 
$(\eps^{\beta} X_{\eps})_{\eps \ge 0 }$ 
satisfies a LDP with speed~$\eps^{-\beta}$ and rate function~$\Lambda^X_1$  
by Theorem~\ref{thm:rBergLDPthm} together with the \blue{contraction principle (Proposition \ref{contractionPrinciple} below).
Substituting}~$\eps$ \blue{with}~$t$ completes the proof.
\end{proof}
\begin{remark}
Recently, Forde and Zhang~\cite{FZ15} derived pathwise large deviations for rough volatility models,
with application (by scaling) to small-time asymptotics of the corresponding implied volatility.
The model they consider is of the following form, for the log stock price process:
\begin{equation*}
\left\{
\begin{array}{ll}
\D X_t = -\frac{1}{2}\sigma(Y_t)^2 \D t + \sigma(Y_t) \D B_t, \\
Y_t = W_t^H,
\end{array}
\right.
\end{equation*}
where $B$ is a standard Brownian motion, $W^H$ a (possibly correlated) fractional Brownian motion.
In order to prove LDP, they consider a small-noise version of the SDE above, namely:
\begin{equation*}
\left\{
\begin{array}{ll}
\D X_t^\eps = -\frac{1}{2}\eps\sigma(Y_t)^2 \D t + \sqrt{\eps}\sigma(Y_t) \D B_t, \\
Y_t^\eps = \eps^H W_t^H.
\end{array}
\right.
\end{equation*}
It is of course tempting to apply their results to the rough Bergomi model.
Unfortunately, the following intricacies make this impossible:
firstly, they assume the function~$\sigma$ to have at most linear growth, whereas it is of exponential growth in rough Bergomi;
secondly, their scaling assumption, allowing them to translate small-noise into small-time estimates 
crucially relies on the volatility process~$Y$ being driftless \cite[Equation~(4.4)]{FZ15},
which does not hold in rough Bergomi.
\end{remark}

\blue{Theorem \ref{thm:rBergLDPthm} and Corollary \ref{LDPSmalltX} have several} potential applications within mathematical finance, two of which we now outline. One application is to establish the small-time behaviour of \blue{implied volatility under} the rough Bergomi model. \blue{This would require, however, first showing that the stock price process in the rough Bergomi model is a true martingale, which is far from trivial and beyond the scope of the present paper.} 
\blue{Another potential} application is to variance reduction of \blue{Monte Carlo pricing of path-dependent options, following the importance sampling approach employed in \cite{MS2018IS,Rob10,S2013IS}}.

There is a degree of flexibility when defining the rescaled process~$X^{\eps}$. 
For example, we may define $X^{\eps}_t := \eps^{ \alpha } X_{\eps^{\gamma}t}$, 
where $\gamma:= \alpha / (\alpha /2 + 5/4)$. 
In this case define $(Z^{\eps},B^{\eps}) := \eps^{\gamma ( \alpha + 1/2)} (Z,B)$, 
and $v^{\eps}_{t} := \eps^{\alpha + \gamma } v_{\eps^{\gamma} t}$,
so that $X^{\eps}$ satisfies a LDP with speed $\eps^{-2 \gamma(\alpha + 1/2)}$ 
and rate function identical to that in Theorem~\ref{thm:rBergLDPthm}. 
This essentially falls in the category of moderate deviations, within the context of~\cite{Gui03}, 
for the original process~$X$, which is scaled by $1/(h(t)\sqrt{t})$, 
where $h(t) \in [1, t^{-1/2}]$ for small enough~$t$.

\begin{remark}\label{rem:UncorrelatedCase}
The structure of the Hilbert space~$\mathscr{H}_{\rho }^{K_{\alpha}}$ 
in Corollary~\ref{cor:RKHS2} below
precisely determines the rate function~$ \Lambda^X$.
In the uncorrelated case $\rho=0$, $\mathscr{H}_{\rho }^{K_{\alpha}}$
(and its inner product) is degenerate, 
and clearly~$\Lambda^*$ does not make sense.
This case needs to be treated separately and is analysed in Section~\ref{sec: uncorrel}.
\end{remark}

From~\eqref{eq:IiOper}, every $\zxy\in \mathscr{H}^{K_\alpha}_{\rho}$ 
has the representation $\zxy = \Ii^{K_\alpha}_{\rho}f$, 
for some $f \in \LTwo$.
Therefore the rate function in Theorem~\ref{thm:rBergLDPthm} can be rewritten as
\begin{equation} \label{eqn:reformulatedXRateFunction}  
\Lambda^X(\varphi )
 = \inf_{f \in \LTwo} \left\{\frac{1}{2} \int_0^1 f^2(u) \D u:
 \varphi = \II\left(\Mm\left(\Ii^{K_\alpha}_{\rho}f\right)\right)
  \right\}.
\end{equation}
With this formulation, it is easy to see that $\Lambda^X(0)=0$: 
denoting $\mathrm{z}^{\widetilde{x}}_{y}:= \Mm\zxy$ and using that $\widetilde{x} > 0$, 
it follows that if $\II(\widetilde{x},y)=0$, then $ y \equiv 0$, which in turn implies that $f \equiv 0,$ 
and hence $\Lambda^X(0)=0$.
Furthermore, since clearly $\Lambda^X$ cannot take negative values, 
its minimum value is attained at the origin.


\section{Gaussian Measures on Banach Spaces and Large Deviations}\label{sec:GaussLDP}
In this section, we gather several elements from the theory of Gaussian measures and large deviations
in order to prove Theorem~\ref{thm:rBergLDPthm}.
This proof shall require a certain number of steps, in particular the precise characterisation of the reproducing kernel Hilbert spaces associated to the different processes under consideration.


\subsection{Gaussian measures on Banach spaces}\label{sec:Gaussian}
A centred process~$( Z_t)_{t \in \Tt}$ is called Gaussian 
if for all $n \in \mathbb{N}$ and any $t_1,\ldots,t_n \in \Tt$, 
the random variables $Z_{t_1},\ldots, Z_{t_n}$ are jointly Gaussian;
any such process is then completely characterised by its covariance function. 
We recall some basic facts, needed later, on Gaussian measures on Banach spaces, 
mostly following Carmona and Tehranchi~\cite[Chapter 3]{CT06}.
Let $\left( \Es, \|\cdot\|_{ \Es} \right) $ be a real, separable Banach space, 
and~$\Es^{*}$ its topological dual (i.e. the space of all linear functionals on~$\Es$), 
with duality relationship $\langle \cdot,\cdot \rangle_{ \Es^{*} \Es }$. 
The bilinear functional $\langle \cdot ,\cdot \rangle_ {\Es^* \Es  } : \Es^*
\times \Es \to \RR$ is such that if $\langle x^*, x \rangle_ {\Es^* \Es}=0$ for all $x^*$ $\in \Es^*$ 
(resp. $x \in \Es$) then $x=0$ (resp. $x^* =0$)~\cite[Page 195]{AB06}. 
We shall further let $\Bs(\Es)$ denote the Borel $\sigma$-algebra of~$\Es$.

\begin{definition}\cite[Definition 3.1]{CT06} \label{defGaussian}
A measure~$\mu$ on $(\Es,\Bs(\Es))$ is (centred) Gaussian 
if every $f^* \in \Es^*$, when viewed as a random variable via the dual pairing 
$f \mapsto \langle f^*, f \rangle_{ \Es^* \Es}$, is a (centred) real Gaussian random variable on 
$(\Es,\Bs(\Es),\mu)$.
\end{definition}

The following proposition~\cite[Proposition 3.1]{CT06} characterises Gaussian measures on Banach spaces.
\begin{proposition}\label{prop1} 
Any (centred) Gaussian measure~$\mu$ on~$\Es$ is the law of some (centred) Gaussian process 
with continuous paths, indexed by some compact metric space. 
\end{proposition}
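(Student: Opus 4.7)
The plan is to realise $\mu$ as the law of a Gaussian process indexed by the dual unit ball of $\Es$ equipped with the weak-$*$ topology. Setting $K^* := \{f^* \in \Es^*: \|f^*\|_{\Es^*} \le 1\}$, the Banach--Alaoglu theorem yields that $K^*$ is weak-$*$ compact, while the separability of $\Es$ ensures the weak-$*$ topology on $K^*$ is metrisable: fixing a countable dense $\{x_n\}_{n \ge 1} \subset \Es$, the metric
$$
d(f^*,g^*) := \sum_{n\ge 1} 2^{-n}\frac{|\langle f^*-g^*, x_n\rangle_{\Es^*\Es}|}{1+|\langle f^*-g^*, x_n\rangle_{\Es^*\Es}|}
$$
induces the weak-$*$ topology on $K^*$, making $(K^*, d)$ a compact metric space, which will serve as the index set.

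On the probability space $(\Es, \Bs(\Es), \mu)$, I would then define the canonical process $X_{f^*}(x) := \langle f^*, x\rangle_{\Es^*\Es}$ for $f^* \in K^*$ and $x \in \Es$. Each $X_{f^*}$ is centred Gaussian by Definition~\ref{defGaussian}, and any linear combination $\sum_i c_i X_{f_i^*}$ equals $\langle \sum_i c_i f_i^*, \cdot\rangle_{\Es^*\Es}$, which is again centred Gaussian; hence every finite-dimensional marginal $(X_{f_1^*}, \ldots, X_{f_n^*})$ is jointly Gaussian. Continuity of sample paths comes essentially for free, for every outcome $x \in \Es$ rather than merely $\mu$-almost surely: the map $f^* \mapsto \langle f^*, x\rangle_{\Es^*\Es}$ is by definition weak-$*$ continuous on $\Es^*$, and hence continuous on $(K^*, d)$.

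To finish, $\mu$ must be recovered as the law of this process, which I would do via the natural evaluation map $\Phi: \Es \to C(K^*)$ given by $\Phi(x)(f^*) := \langle f^*, x\rangle_{\Es^*\Es}$. The Hahn--Banach theorem guarantees that $\Es^*$ separates points of $\Es$, so $\Phi$ is injective, and moreover $\|x\|_\Es = \sup_{f^* \in K^*}|\langle f^*, x\rangle_{\Es^*\Es}|$, making $\Phi$ an isometric linear embedding. Continuity of $\Phi$ renders it Borel measurable, so the pushforward $\Phi_*\mu$ is a well-defined Borel probability measure on $C(K^*)$ coinciding with the law of $(X_{f^*})_{f^* \in K^*}$; through this isometric identification, $\mu$ is the law of a centred Gaussian process with continuous paths on the compact metric space $(K^*, d)$. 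The main obstacle, beyond verifying metrisability of the weak-$*$ topology on $K^*$ using separability of $\Es$, is this final identification: making rigorous and canonical the sense in which $\mu$ \emph{is} the law of the process via the embedding $\Phi$, which is ultimately a measure-theoretic statement rather than a pointwise equality of measures on $\Es$ itself.
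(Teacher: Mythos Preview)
The paper does not actually prove this proposition: it is stated as a quoted result from Carmona and Tehranchi~\cite[Proposition 3.1]{CT06}, with no argument given. Your proposal is correct and is in fact the standard proof (and essentially the one in the cited reference): realise the process on the weak-$*$ compact unit ball of~$\Es^*$, use separability of~$\Es$ for metrisability, and embed~$\Es$ isometrically into~$\Cc(K^*)$ via evaluation. The only point worth sharpening is the one you yourself flag at the end: the pushforward $\Phi_*\mu$ lives on~$\Cc(K^*)$, not on~$\Es$, so ``$\mu$ is the law of the process'' must be read as ``$\mu$ is identified with $\Phi_*\mu$ via the isometric embedding~$\Phi$''; since~$\Phi$ is a linear isometry onto its (closed) range, this identification is canonical and preserves the Borel structure, so the statement is unambiguous once made explicit.
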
 

Note that every real-valued, centred Gaussian process taking values in~$\Es$  induces some measure on~$\Cc$, 
the space of continuous functions from~$\Tt$ to~$\RR$.
By Proposition~\ref{prop1}, one can construct a centred Gaussian probability measure~$\mu$ on~$\Es$ 
by constructing the corresponding Gaussian process. 
The above argument may be extended to a $d$-dimensional centred Gaussian process,
thereby inducing a Gaussian measure on $\Es=\Cc^d$.
For a Gaussian measure~$\mu$ on~$\Es$, we introduce the bounded, linear operator $\Gamma: \Es^*\to \Es$ as
\begin{equation}\label{eq:Gamma}
\Gamma(f^*):= \int_{\Es} \langle f^*, f \rangle_{\Es^* \Es} f \mu(\D f),
\end{equation}
and note in particular that $\langle f^*, f \rangle_{\Es^* \Es} f$ 
is an $\Es$-valued random variable on 
$\left(\Es, \Bs( \Es), \mu\right)$. 

\begin{definition}\cite[Definition 3.3]{CT06}\label{defRKHS} 
The reproducing kernel Hilbert space (RKHS)~$\mathcal{H}_{\mu}$ of~$\mu$ 
is defined as the completion of~$\Gamma(\Es^*)$ with the inner product
$\displaystyle \left\langle \Gamma(f^*), \Gamma(g^*) \right\rangle_{\mathcal{H}_{\mu}}:= \int_{\Es} \langle f^*, f \rangle_{\Es^* \Es} \langle g^*, f \rangle_{\Es^* \Es} \mu(\D f)$.
\end{definition}

For the inclusion map $\iota: \mathcal{H}_{\mu} \to \Es$, 
the space $\iota(\mathcal{H}_{\mu})$ is dense in~$\Es$; 
it follows then for the adjoint map $\iota^{*}:\Es^* \to \mathcal{H}_{\mu}^{*}$ 
that $\iota^*(\Es^*)$ 
is dense in $\mathcal{H}_{\mu}^{*}$. 
Recall also that $\mathcal{H}_{\mu}$ and $\mathcal{H}_{\mu}^{*}$ are isometrically 
isomorphic, which we denote by $\mathcal{H}_{\mu}^{*} \simeq \mathcal{H}_{\mu}$ 
(by the Riesz representation theorem, 
as $\RR$ is the underlying field). 
Now, for a centred Gaussian random variable~$f^*$ on~$\Es$, by Definition~\ref{defGaussian}, it follows that
$$
\EE\left(\langle f^*, f \rangle_{\Es^*\Es}^2 \right)
 = \int_{\Es} \langle f^*, f \rangle_{\Es^*\Es} \langle f^*, f \rangle_{\Es^*\Es} \mu(\D f)
 = \| f \|_{\mathcal{H}_{\mu} }^2 
= \|\iota^{*} f^{*} \|_{ \mathcal{H}_{\mu}^*}^2.
$$
This yields the following equivalent form of Definition~\ref{defRKHS} for the RKHS of~$\mu$~\cite[Page 88]{DS89}.

\begin{definition}\label{defRKHS2} 
A real, separable Hilbert space~$\mathcal{H}_{\mu}$ such that $\mathcal{H}_{\mu} \subset \Es$
is the RKHS of~$\mu$ if the following two conditions hold: 
\begin{itemize}
\item there exists an embedding $I: \mathcal{H}_{\mu}\to\Es$, i.e. an injective continuous map whose image is  dense in~$\Es$;
\item any $f^*\in \Es^{*}$ is a centred Gaussian random variable on $\Es$ with 
variance $\| I^* f^*\|_{\mathcal{H}_{\mu}^{*} }^2$.
\end{itemize}
\end{definition}
\begin{remark}
The embedding $I$ need not necessarily be the inclusion map.
\end{remark}

\begin{remark}\label{remarkInclusion}
Given a triplet $(\Es, \mathcal{H}_{\mu}, \mu)$,
consider the inclusion map $I^*: \Es^{*} \to L^2(\Es, \mu) $ 
(we think of~$\Es^{*}$ as a dense subset in $\mathcal{H}_{\mu}^{*} \simeq \mathcal{H}_{\mu}$ 
by~$\iota^*$). 
Since~$I^*$ preserves the Hilbert space structure of $L^2(\Es,\mu)$, 
it can be extended to an isometric embedding 
$\bar{I}^* : \mathcal{H}_{\mu}^{*} \to L^2(\Es,\mu)$ such that 
$\|\bar{I}^* f^*\|_{\mathcal{H}_{\mu}^{*}} = \| f^* \|_{L^2(\Es,\mu)}$.
\end{remark}

We now explicitly characterise the RKHS of the measures 
induced by~$(Z_t)_{t \in \Tt }$ (introduced in~\eqref{eq:SDEZ}) on~$\Cc$, 
and by~$((Z_t,B_t))_{t \in \Tt}$ on~$\Cc^2$.
In fact, we first prove a more general result, using the operators in~\eqref{eq:IiOper}.
Introduce the following assumption on the function $\varphi$ defining the operator $\Ii^\varphi $:
\begin{assumption}\label{ass:Convolution}
There exists $\phi\in L^2(\Tt ,\RR)$ such that $\int_0^{\varepsilon} \vert \phi(s) \vert \D s > 0 $ for some $\varepsilon > 0$ and $\varphi(\cdot,t)=\phi(t-\cdot)$ for any $t \in \Tt$.
\end{assumption}

\begin{theorem}\label{thm:RKHS1}
Let $\varphi$ satisfy Assumption~\ref{ass:Convolution}, 
which makes~$\Ii^\varphi$ injective on~$\LTwo$. 
The RKHS of the measure induced by the process~$\int_{0}^{\cdot}\varphi(u,\cdot)\D W_u$ on~$\Cc$ is given by
$\mathscr{H}^{\varphi} = \{\Ii^\varphi f : f \in \LTwo\}$,
with inner product 
$\langle \Ii^{\varphi} f_1, \Ii^{\varphi} f_2\rangle_{\mathscr{H}^{\varphi}} := \langle f_1, f_2  \rangle_{\LTwo}$.
\end{theorem}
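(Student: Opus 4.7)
The plan is to verify the conditions in Definition~\ref{defRKHS} directly: show that $\Gamma(\Cc^*) \subset \mathscr{H}^\varphi$, that the inner product inherited from Definition~\ref{defRKHS} matches $\langle \cdot, \cdot\rangle_{\mathscr{H}^\varphi}$, and that $\Gamma(\Cc^*)$ is dense in $\mathscr{H}^\varphi$. First I would establish the injectivity of $\Ii^\varphi$ on $\LTwo$. By Assumption~\ref{ass:Convolution}, $(\Ii^\varphi f)(t) = \int_0^t \phi(t-u) f(u) \D u = (\phi \ast f)(t)$ for $t \in \Tt$, so the Titchmarsh convolution theorem tells us that if $\phi \ast f \equiv 0$ on $[0,1]$, there exist $\alpha, \beta \ge 0$ with $\alpha + \beta \ge 1$ such that $\phi$ vanishes almost everywhere on $[0,\beta]$ and $f$ on $[0,\alpha]$. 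The nontriviality of $\phi$ near~$0$ encoded in Assumption~\ref{ass:Convolution} forces $\beta = 0$, whence $\alpha = 1$ and $f \equiv 0$. Consequently $\Ii^\varphi : \LTwo \to \mathscr{H}^\varphi$ is a bijective isometry, making $\mathscr{H}^\varphi$ a Hilbert space, and continuity of the translation operator on $\LTwo$ ensures $\mathscr{H}^\varphi \subset \Cc$.

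Next I would compute $\Gamma(f^*)$ explicitly. By the Riesz--Markov theorem every $f^* \in \Cc^*$ corresponds to a finite signed Borel measure $\nu$ on $\Tt$ via $\langle f^*, z\rangle_{\Cc^*\Cc} = \int_\Tt z(t) \nu(\D t)$. Using the covariance $\EE(Z_s Z_t) = \int_0^{s \wedge t} \phi(s-u)\phi(t-u)\D u$ together with Fubini,
\[
\Gamma(f^*)(t)
= \EE\bigl(\langle f^*, Z\rangle_{\Cc^*\Cc} Z_t\bigr)
= \int_\Tt \EE(Z_s Z_t)\, \nu(\D s)
= \int_0^t \phi(t-u) g_\nu(u) \D u
= (\Ii^\varphi g_\nu)(t),
\]
where $g_\nu(u) := \int_{[u,1]} \phi(s-u) \nu(\D s)$; hence $\Gamma(\Cc^*) \subset \mathscr{H}^\varphi$. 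For two functionals $f^*, h^*$ with associated measures $\nu, \widetilde{\nu}$, a stochastic Fubini argument identifies $\langle f^*, Z\rangle_{\Cc^*\Cc} = \int_0^1 g_\nu(u) \D W_u$, and the It\^o isometry then yields
\[
\int_\Cc \langle f^*, z\rangle_{\Cc^*\Cc} \langle h^*, z\rangle_{\Cc^*\Cc}\, \mu(\D z)
= \langle g_\nu, g_{\widetilde\nu}\rangle_{\LTwo}
= \langle \Ii^\varphi g_\nu, \Ii^\varphi g_{\widetilde\nu}\rangle_{\mathscr{H}^\varphi},
\]
where $\mu$ denotes the law of $Z$ on $\Cc$, matching the inner product on $\mathscr{H}^\varphi$ on the image of $\Gamma$.

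Finally I would show that $\Gamma(\Cc^*)$ is dense in $\mathscr{H}^\varphi$, equivalently (via the isometry $\Ii^\varphi$) that $\{g_\nu : \nu \text{ a finite signed measure on } \Tt\}$ is dense in $\LTwo$. If $h \in \LTwo$ is orthogonal to every $g_\nu$, Fubini rewrites the orthogonality relation as $\int_\Tt (\Ii^\varphi h)(s) \nu(\D s) = 0$ for every signed measure $\nu$; specialising to Dirac masses forces $\Ii^\varphi h \equiv 0$, and the injectivity already established gives $h = 0$. Combined with the preceding steps, this identifies $\mathscr{H}^\varphi$ as the completion of $\Gamma(\Cc^*)$ under the inner product of Definition~\ref{defRKHS}, and therefore as the RKHS of the measure induced by $\int_0^\cdot \varphi(u,\cdot) \D W_u$ on $\Cc$.

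The main obstacle is the injectivity step: the convolution structure in Assumption~\ref{ass:Convolution} is precisely what unlocks the Titchmarsh theorem, and one must carefully extract from the hypothesis $\int_0^\varepsilon |\phi(s)| \D s > 0$ that $\phi$ is nontrivial in every neighbourhood of $0$, so that $\beta = 0$ is the only admissible value in the Titchmarsh alternative. Once injectivity is secured the density step is essentially its dual formulation, and the inner-product identification reduces to the standard covariance computation together with a stochastic Fubini interchange.
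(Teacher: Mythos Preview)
Your argument is correct and self-contained, but it follows a genuinely different route from the paper. The paper verifies Definition~\ref{defRKHS2}: it first shows $(\mathscr{H}^\varphi,\langle\cdot,\cdot\rangle_{\mathscr{H}^\varphi})$ is a separable Hilbert space, then takes the inclusion $\iota:\mathscr{H}^\varphi\hookrightarrow\Cc$ as the embedding, invokes an external result of Cherny~\cite[Lemma~2.1]{Che08} to obtain density of $\mathscr{H}^\varphi$ in $\Cc$, and finally checks the variance condition abstractly via Remark~\ref{remarkInclusion}. You instead work directly with Definition~\ref{defRKHS}: you compute $\Gamma(f^*)$ explicitly using the Riesz--Markov representation of $\Cc^*$ and the covariance of~$Z$, identify it as $\Ii^\varphi g_\nu$ for a concrete $g_\nu\in\LTwo$, match the inner products via stochastic Fubini and the It\^o isometry, and then prove density of $\Gamma(\Cc^*)$ in $\mathscr{H}^\varphi$ by a duality argument that reduces to the injectivity of $\Ii^\varphi$ already established. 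Your approach is more constructive and avoids the dependence on~\cite{Che08}; the paper's is shorter but outsources the density step. Both rely on Titchmarsh in the same way for injectivity, and your closing remark correctly flags that one must read Assumption~\ref{ass:Convolution} as giving nontriviality of $\phi$ at the origin (i.e.\ $0$ in the support of $\phi$) for the Titchmarsh alternative to force $\beta=0$; the paper glosses over this point as well.
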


\begin{proof}[Proof of Theorem \ref{thm:RKHS1}]
See Section \ref{Section: proof of main results}.
\end{proof}

\begin{corollary}\label{cor:RKHS1}
The RKHS of the Gaussian measure induced (on~$\Cc$) by~$(Z_t)_{t \in \Tt}$ in~\eqref{eq:SDEZ} is
$\mathscr{H}^{K_{\alpha}}$.
\end{corollary}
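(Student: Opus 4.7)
The plan is to deduce the corollary as an immediate consequence of Theorem~\ref{thm:RKHS1} applied with $\varphi = K_{\alpha}$. To do so, I just need to verify that the kernel $K_{\alpha}(s,t) = \eta\sqrt{2\alpha+1}(t-s)^{\alpha}$ satisfies Assumption~\ref{ass:Convolution}.

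Define $\phi:(0,1]\to\RR$ by $\phi(u):=\eta\sqrt{2\alpha+1}\,u^{\alpha}$ (and $\phi(0):=0$). By construction, $K_{\alpha}(s,t)=\phi(t-s)$ for every $s<t$ in $\Tt$, which is the convolution-type condition required. Since $\alpha\in(-\tfrac{1}{2},0)$, we have $2\alpha+1>0$, so
\[
\int_{0}^{1}\phi(u)^{2}\D u
= \eta^{2}(2\alpha+1)\int_{0}^{1} u^{2\alpha}\D u
= \eta^{2} < \infty,
\]
which gives $\phi\in L^{2}(\Tt,\RR)$. Moreover, because $\alpha+1>\tfrac{1}{2}>0$, for any $\eps\in(0,1]$,
\[
\int_{0}^{\eps}|\phi(s)|\D s
= \eta\sqrt{2\alpha+1}\,\frac{\eps^{\alpha+1}}{\alpha+1}
> 0,
\]
so the non-degeneracy condition in Assumption~\ref{ass:Convolution} is also met.

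With Assumption~\ref{ass:Convolution} verified for $\varphi=K_{\alpha}$, Theorem~\ref{thm:RKHS1} asserts that $\Ii^{K_{\alpha}}$ is injective on $\LTwo$ and that the RKHS of the Gaussian measure induced on $\Cc$ by the process $\int_{0}^{\cdot}K_{\alpha}(u,\cdot)\D W_u = Z$ is precisely $\mathscr{H}^{K_{\alpha}}=\{\Ii^{K_{\alpha}}f : f\in\LTwo\}$ equipped with $\langle \Ii^{K_{\alpha}}f_1,\Ii^{K_{\alpha}}f_2\rangle_{\mathscr{H}^{K_{\alpha}}}=\langle f_1,f_2\rangle_{\LTwo}$. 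This is exactly the statement of the corollary.

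There is essentially no obstacle here: all the work is done inside Theorem~\ref{thm:RKHS1}; the corollary is just the observation that $K_{\alpha}$ fits the hypotheses of that theorem, which reduces to the standard fact that $u\mapsto u^{\alpha}$ is square integrable on $[0,1]$ as soon as $\alpha>-\tfrac{1}{2}$. The only minor point worth flagging is that $\phi$ blows up at $0$ when $\alpha<0$, but this is harmless since the $L^{2}$ norm and $L^{1}$ norm over $[0,\eps]$ are both still finite under the constraint $\alpha\in(-\tfrac12,0)$.
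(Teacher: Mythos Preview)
Your proposal is correct and matches the paper's approach exactly: the corollary is stated in the paper without proof, as an immediate specialisation of Theorem~\ref{thm:RKHS1}, and your verification that $K_{\alpha}$ satisfies Assumption~\ref{ass:Convolution} is precisely the missing check.
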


We need to extend Theorem~\ref{thm:RKHS1} (and Corollary~\ref{cor:RKHS1}) to find the RKHS of  
the Gaussian measure on the space~$\Cc^2$ 
induced by the two-dimensional process $((Z_t, B_t))_{t \in \Tt}$, 
where~$Z$ and~$B$ are defined in~\eqref{eq:SDEZ} and~\eqref{rBergomi} respectively.   

\begin{theorem}\label{thm:RKHS2} 
Let $\varphi_1,\varphi_2$ satisfy Assumption~\ref{ass:Convolution}, 
which makes~$\Ii^{\varphi_1}_{\varphi_2}$ injective on~$\LTwo$.
Introduce the $\RR^2$-valued process $(Y^1, Y^2)$ as 
$Y^i := \int_0^{\cdot} \varphi_i (s, \cdot) \D W^i_s$ for $i=1,2$,
where $W^1$ and $W^2$ are two standard Brownian motions with
correlation $\rho \in [-1,1]\backslash \{0\}$. 
Then the RKHS of the measure induced by~$(Y^1,Y^2)$ on~$\Cc^2$ is
$\mathscr{H}^{\varphi_1}_{\varphi_2} = \left\{\Ii^{\varphi_1}_{\varphi_2} f: f\in \LTwo \right\}$,
with inner product
$\left\langle \Ii^{\varphi_1}_{\varphi_2} f_1, \Ii^{\varphi_1}_{\varphi_2} f_2\right\rangle_{\mathscr{H}^{\varphi_1}_{\varphi_2}} := \langle f_1, f_2 \rangle_{\LTwo}$.
\end{theorem}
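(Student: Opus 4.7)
The plan is to verify Definition~\ref{defRKHS2} for $\mathscr{H}^{\varphi_1}_{\varphi_2}$, taking the inclusion map as the embedding $I$, and echoing the structure of the proof of Theorem~\ref{thm:RKHS1}.

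First I would confirm that $\Ii^{\varphi_1}_{\varphi_2}$ is injective on $\LTwo$. Indeed, $\Ii^{\varphi_1}_{\varphi_2} f = 0$ forces $\Ii^{\varphi_1} f = 0$ in its first component, so injectivity reduces to the one-dimensional statement already contained in the proof of Theorem~\ref{thm:RKHS1} under Assumption~\ref{ass:Convolution}. This makes the stated inner product well defined, and $f \mapsto \Ii^{\varphi_1}_{\varphi_2} f$ becomes an isometric bijection from $\LTwo$ onto $\mathscr{H}^{\varphi_1}_{\varphi_2}$, which therefore inherits the structure of a real, separable Hilbert space from $\LTwo$.

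Next I would verify the embedding conditions. Continuity of the inclusion $\mathscr{H}^{\varphi_1}_{\varphi_2} \hookrightarrow \Cc^2$ reduces to componentwise uniform estimates of the form $\sup_{t \in \Tt}|\Ii^{\varphi_i} f(t)| \leq \sup_{t \in \Tt}\|\varphi_i(\cdot,t)\|_{\LTwo}\|f\|_{\LTwo}$ via Cauchy-Schwarz, using the integrability provided by Assumption~\ref{ass:Convolution}. For density of $I(\mathscr{H}^{\varphi_1}_{\varphi_2})$ in $\Cc^2$, I would approximate arbitrary continuous paths by images under $\Ii^{\varphi_1}_{\varphi_2}$ of sufficiently regular test functions, exploiting the convolution structure $\varphi_i(\cdot, t) = \phi_i(t - \cdot)$ and the fact that the densities of the underlying Gaussian measures have full support on $\Cc$.

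The core step is then the variance identity. For any $f^* \in (\Cc^2)^*$, Riesz representation on $\Cc$ gives $f^* = (\nu_1, \nu_2)$ with $\nu_i$ finite signed Borel measures on $\Tt$. A Fubini interchange recasts $\langle f^*, (Y^1, Y^2) \rangle_{(\Cc^2)^*,\Cc^2}$ as $\int_0^1 g_1(s)\,\D W^1_s + \int_0^1 g_2(s)\,\D W^2_s$, where $g_i(s) := \int_s^1 \varphi_i(s, t)\,\nu_i(\D t)$ belongs to $\LTwo$; the variance of this Gaussian random variable then admits an explicit expression in $g_1$, $g_2$ and $\rho$. On the dual side, a parallel Fubini argument identifies $I^* f^*$ inside $(\mathscr{H}^{\varphi_1}_{\varphi_2})^*\cong\mathscr{H}^{\varphi_1}_{\varphi_2}$ through the Riesz isomorphism established in the first step, yielding a closed-form expression for $\|I^* f^*\|^2_{(\mathscr{H}^{\varphi_1}_{\varphi_2})^*}$. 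Matching the two expressions closes the verification.

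I expect the variance matching to be the principal obstacle: one has to track how the cross-covariance weighted by $\rho$ enters on the probabilistic side and reconcile it with the norm inherited from $\LTwo$ on the RKHS side. Density of $I(\mathscr{H}^{\varphi_1}_{\varphi_2})$ inside $\Cc^2$ likewise requires care, since only a single scalar function $f$ is available to approximate pairs of continuous paths; this is where the convolution form of Assumption~\ref{ass:Convolution} and the non-degeneracy condition $\rho \neq 0$ must intervene in an essential way.
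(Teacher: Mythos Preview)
Your overall scaffold---verifying Definition~\ref{defRKHS2} for the inclusion map, reducing injectivity to the one–dimensional case, and transporting the Hilbert structure from~$\LTwo$---is exactly the paper's strategy. The paper is simply terser at the two places you single out: for density it quotes \cite[Lemma~2.1]{Che08} rather than building an approximation by hand, and for the variance identity it invokes Remark~\ref{remarkInclusion} in one line rather than carrying out an explicit stochastic--Fubini computation.

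Where you go further than the paper is precisely where the difficulty lies, and your instincts are correct. If you run your Fubini computation, then with $g_i(s):=\int_s^1 \varphi_i(s,t)\,\nu_i(\D t)$ you obtain
\[
\VV\big(\langle f^*, (Y^1,Y^2)\rangle\big)
=\|g_1\|_{\LTwo}^2+\|g_2\|_{\LTwo}^2+2\rho\,\langle g_1,g_2\rangle_{\LTwo},
\]
whereas on the RKHS side the Riesz representative of $I^*f^*$ is $\Ii^{\varphi_1}_{\varphi_2}(g_1+g_2)$, so that
\[
\|I^*f^*\|_{(\mathscr{H}^{\varphi_1}_{\varphi_2})^*}^{2}
=\|g_1+g_2\|_{\LTwo}^{2}
=\|g_1\|_{\LTwo}^2+\|g_2\|_{\LTwo}^2+2\,\langle g_1,g_2\rangle_{\LTwo}.
\]
These coincide for all $f^*$ only when $|\rho|=1$. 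The same obstruction hits density: $\mathscr{H}^{\varphi_1}_{\varphi_2}$ is parametrised by a \emph{single} $f\in\LTwo$, so its $\Cc^2$–closure is the graph $\{(x,y):y\text{ is determined by }x\}$ of a continuous linear relation and cannot fill~$\Cc^2$ unless the two coordinates of the process are deterministically linked, i.e.\ $|\rho|=1$. Your closing paragraph anticipates both issues; the paper's proof sidesteps them by citing \cite{Che08} and Remark~\ref{remarkInclusion} without performing the check. In short, your plan matches the paper's route, but the explicit computation you propose will not close for $|\rho|<1$, and this is a genuine gap rather than a mere technicality to be filled in.
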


\begin{proof}[Proof of Theorem \ref{thm:RKHS2}]
See Section \ref{Section: proof of main results}.
\end{proof}

\begin{corollary}\label{cor:RKHS2} 
The RKHS of the measure induced (on~$\Cc^2$) by the process~$((Z_t,B_t))_{t \in \Tt}$ is
$\mathscr{H}^{K_\alpha}_{\rho}$.
\end{corollary}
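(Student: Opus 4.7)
The plan is straightforward: invoke Theorem~\ref{thm:RKHS2} with an explicit identification of the components. First, I would set $Y^1 := Z$ by taking $\varphi_1(s,t) := K_\alpha(s,t)$ and $W^1 := W$; this matches~\eqref{eq:SDEZ} exactly. Second, I would set $Y^2 := B$ by taking $\varphi_2 \equiv \rho$ (the constant function, in line with the convention~$\Ii^c$ for constant integrands introduced just after~\eqref{eq:IiOper}) and $W^2$ a standard Brownian motion correlated with~$W^1$ so that the resulting pair $(Y^1, Y^2)$ has the same law as $(Z, B)$; this is consistent with the defining decomposition $B = \rho W + \sqrt{1-\rho^2}\,W^\perp$ appearing in~\eqref{rBergomi}.

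The sole substantive step is then to verify Assumption~\ref{ass:Convolution} for both kernels. For $\varphi_1 = K_\alpha$, set $\phi_1(u) := \eta\sqrt{2\alpha+1}\, u^\alpha$ on $(0,1]$, so that $\varphi_1(s,t) = \phi_1(t-s)$; membership in $\LTwo$ reduces to integrability of $u \mapsto u^{2\alpha}$ near zero, which holds since $\alpha \in (-\tfrac12, 0)$, while $\int_0^\eps |\phi_1(s)|\D s = \eta\sqrt{2\alpha+1}\,\eps^{\alpha+1}/(\alpha+1) > 0$ for any $\eps > 0$. For $\varphi_2 \equiv \rho \neq 0$, the assumption is immediate with $\phi_2 \equiv \rho$. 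Theorem~\ref{thm:RKHS2} then yields both injectivity of~$\Ii^{K_\alpha}_\rho$ on $\LTwo$ and the claim that the RKHS of the Gaussian measure induced on~$\Cc^2$ by $(Z, B)$ equals $\mathscr{H}^{K_\alpha}_\rho$.

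There is no genuine obstacle here, as the entire content is packaged inside Theorem~\ref{thm:RKHS2}; the only mild subtlety worth recording is the correct matching of the quadruple $(\varphi_1, \varphi_2, W^1, W^2)$ to the ingredients of the rough Bergomi model, and in particular reading the symbol~$\rho$ in the subscript of $\mathscr{H}^{K_\alpha}_\rho$ as the constant kernel $\varphi_2 \equiv \rho$ rather than as a correlation parameter entering separately into the definition of the Hilbert space.
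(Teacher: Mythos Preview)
There is a genuine gap in your identification of $Y^2$ with $B$. With $\varphi_2 \equiv \rho$ and $W^2$ a standard Brownian motion, one has $Y^2_t = \int_0^t \rho\,\D W^2_s = \rho\, W^2_t$, which has variance $\rho^2 t$; but $B_t$ has variance $t$. Hence $(Y^1, Y^2)$ and $(Z, B)$ cannot have the same law unless $|\rho| = 1$, no matter how $W^2$ is correlated with $W^1$. The decomposition $B = \rho W + \sqrt{1-\rho^2}\,W^\perp$ you invoke does not rescue this: it expresses $B$ as driven by \emph{two} independent Brownian motions, and does not allow $B$ to be written as $\rho$ times a single standard Brownian motion.

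The only way to place $(Z,B)$ into the framework of Theorem~\ref{thm:RKHS2} as literally stated is to take $\varphi_1 = K_\alpha$, $\varphi_2 \equiv 1$, $W^1 = W$ and $W^2 = B$; then $W^1, W^2$ are standard Brownian motions with correlation exactly~$\rho$, and $(Y^1, Y^2) = (Z, B)$. With these choices, however, the theorem outputs $\mathscr{H}^{K_\alpha}_{1}$ rather than $\mathscr{H}^{K_\alpha}_{\rho}$. The paper states Corollary~\ref{cor:RKHS2} without argument, so there is no explicit proof to compare against; your attempt to make the deduction precise has in fact exposed a mismatch between the subscript in the corollary and what Theorem~\ref{thm:RKHS2} delivers on the nose, and this cannot be repaired simply by relabelling the kernels.
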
 

\subsection{Large deviations for Gaussian measures}
We now concentrate on large deviations for Gaussian measures. 
As before, $\Es$ denotes a real, separable Banach space with norm $\| \cdot \|_{\Es}$, 
and we introduce a centred Gaussian measure~$\mu$ on $(\Es, \Bs(\Es))$ such that,
for any $\yy \in \Es^{*}$,
$$
\int_{ \Es} \E^{\textrm{i} \langle \yy,\xx \rangle_{ \Es^{*}\Es}}\mu(\D \xx)=
\exp\left(-\frac{C_{\mu}(\yy,\yy)}{2}\right),
$$ 
where $C_{\mu}: \Es^{*} \times \Es^{*} \to [0,+ \infty)$ is a bilinear, symmetric map. 
We define $\Lambda^*_{\mu}:\Es \to \mathbb{R} $ as
$\Lambda^{*}_{\mu}(\xx)
 :=\sup\left\{\langle \yy,\xx \rangle_{\Es^{*}\Es} - \frac{1}{2}C_{\mu}(\yy, \yy): \yy\in \Es^{*}\right\}$ on~$\Es$.
The following lemma is proved in~\cite[Lemma 3.4.2]{DS89}.

\begin{lemma}\label{DSLDPlemma} The following three statements hold for the measure $\mu$: \begin{enumerate} 
\item\label{item:fernique} There exists $\alpha>0$ such that 
$\displaystyle \int_{\Es} \exp\left(\alpha \|\xx\|_{\Es}^2\right)\mu (\D \xx)$  is finite;
\item For all $\yy\in \Es^{*}$, 
$\displaystyle C_{\mu}(\yy,\yy)= 
\int_{ \Es} \langle \yy,\xx \rangle^2_{\Es^{*}\Es} \mu(\D\xx) 
\le \|\yy\|^2_{\Es^{*}}\int_{\Es} \|\xx\|^2_{\Es} \mu(\D\xx) \in (0, + \infty)$;
\item $ \Lambda^{*}_{\mu} $ defines a rate function on $ \Es$ and satisfies 
$\Lambda^{*}_{\mu}(a\yy)=a^2 \Lambda^{*}_{\mu}(\yy)$ for all $a \in \RR$.
\end{enumerate}
\end{lemma}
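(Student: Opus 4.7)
The plan is to handle the three items in order, with the main work lying in (1) — a Fernique-type integrability estimate — while (2) and (3) follow essentially as corollaries once Gaussianity and quadratic integrability are in hand.

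For (1), I would run the classical symmetrisation argument. The key observation is that for two independent copies $X_1, X_2$ with law $\mu$, the rotated pair $U := (X_1 - X_2)/\sqrt{2}$, $V := (X_1 + X_2)/\sqrt{2}$ has the same joint law as $(X_1, X_2)$; this follows by checking that arbitrary linear combinations $\langle \yy_1, U\rangle_{\Es^*\Es} + \langle \yy_2, V\rangle_{\Es^*\Es}$ are centred Gaussians with the correct covariance, using Definition~\ref{defGaussian} and the bilinearity of $C_\mu$. Combined with the triangle inequality on events of the form $\{\|X_1\|_\Es \leq s, \|X_2\|_\Es > t\}$ with $t > s$, this yields the fundamental inequality
\begin{equation*}
\mu(\|\cdot\|_\Es \leq s)\, \mu(\|\cdot\|_\Es > t) \leq \mu(\|\cdot\|_\Es > (t-s)/\sqrt{2})^2.
\end{equation*}
Since $\Es$ is separable, $\mu$ is tight and I can pick $s_0$ large enough that $\mu(\|\cdot\|_\Es \leq s_0) \geq 3/4$; iterating on the sequence $t_0 = s_0$, $t_{n+1} = s_0 + t_n\sqrt{2}$ then produces a doubly exponential tail decay, which I integrate against $\exp(\alpha \|\cdot\|_\Es^2)$ for any $\alpha$ below the resulting decay rate to obtain the stated integrability.

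For (2), I would fix $\yy \in \Es^*$ and use Definition~\ref{defGaussian} to see that the map $\xx \mapsto \langle \yy, \xx\rangle_{\Es^*\Es}$ is a centred real Gaussian on $(\Es, \Bs(\Es), \mu)$; its characteristic function at $t \in \RR$ equals $\exp(-\tfrac{1}{2}C_\mu(t\yy, t\yy)) = \exp(-\tfrac{t^2}{2}C_\mu(\yy, \yy))$ by bilinearity of $C_\mu$, identifying its variance as $C_\mu(\yy, \yy)$ and giving the claimed equality. The elementary bound $|\langle \yy, \xx\rangle_{\Es^*\Es}| \leq \|\yy\|_{\Es^*}\|\xx\|_\Es$ then gives the stated upper estimate after squaring and integrating, and finiteness of $\int_\Es \|\xx\|_\Es^2\, \mu(\D\xx)$ is a direct consequence of (1) (Fernique integrability implies all polynomial moments); strict positivity is inherited from the non-triviality of $\mu$.

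For (3), $\Lambda^*_\mu$ is by construction the pointwise supremum of the family of continuous affine functionals $\xx \mapsto \langle \yy, \xx\rangle_{\Es^*\Es} - \tfrac{1}{2}C_\mu(\yy, \yy)$ indexed by $\yy \in \Es^*$, hence lower semi-continuous; evaluating the supremum at $\yy = 0$ forces $\Lambda^*_\mu \geq 0$, which together establishes the rate function property. The scaling identity comes from the substitution $\yy \mapsto a\yy$ in the supremum defining $\Lambda^*_\mu(a\xx)$, together with bilinearity of $C_\mu$ and linearity of the dual pairing, factoring out an overall $a^2$; the case $a = 0$ is immediate. The main obstacle will be the Fernique step in (1), both for verifying rigorously that the $\pi/4$ rotation preserves the joint law in the infinite-dimensional Banach setting and for carefully propagating the iteration into a quantitative quadratic tail bound on $\|\cdot\|_\Es$; items (2) and (3) are then essentially routine bookkeeping via Gaussianity and convex duality.
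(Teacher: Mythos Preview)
Your outline is correct and follows the classical route: Fernique's symmetrisation argument for~(1), identification of the covariance form via the characteristic function for~(2), and standard Fenchel--Legendre duality for~(3). However, the paper does not actually prove this lemma; it simply cites \cite[Lemma~3.4.2]{DS89} and remarks afterward that statement~(1) is Fernique's theorem~\cite{Fer70}. So you have supplied a proof where the paper opted for a reference, and the argument you sketch is essentially the one found in Deuschel--Stroock.
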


\blue{The reader may recognise statement \eqref{item:fernique} in Lemma \ref{DSLDPlemma} above as Fernique's theorem \cite{Fer70}.}
For an $\Es$-valued Gaussian random variable~$X$ with distribution~$\mu$, 
define $X^{\eps}:= \eps^{1/2}X$, with law~$\mu_{\eps}$. 
Then the following holds~\cite[Theorem 3.4.5]{DS89}:

\begin{theorem}\label{DSthmLDP}
The sequence of probability measures~$(\mu_{\eps})_{\eps\ge 0}$ satisfies a LDP on~$\mathscr{E}$ 
with speed $\eps^{-1}$ and rate function~$\Lambda^{*}_{\mu}$.
\end{theorem}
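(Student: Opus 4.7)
The plan is to follow the classical Gaussian LDP strategy, relying on the three properties collected in Lemma \ref{DSLDPlemma}. First I would verify that $\Lambda^*_\mu$ is a \emph{good} rate function. Computing the Legendre transform in the definition of $\Lambda^*_\mu$ via Fenchel duality gives $\Lambda^*_\mu(\xx) = \tfrac{1}{2}\|\xx\|^2_{\mathcal{H}_\mu}$ for $\xx \in \iota(\mathcal{H}_\mu)$ and $+\infty$ otherwise. Compactness of the sublevel sets in $\Es$ then reduces to compactness of the RKHS unit ball inside $\Es$, which itself follows from Fernique's integrability (item (1) of Lemma \ref{DSLDPlemma}) via a standard argument.

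For the upper bound on a closed set $F \subset \Es$, I would apply the exponential Chebyshev inequality: for every $\yy \in \Es^{*}$,
$$
\mu_\eps(F) \leq \exp\left(-\inf_{\xx \in F}\frac{\langle \yy, \xx\rangle_{\Es^{*}\Es}}{\eps}\right)\int_\Es \exp\left(\frac{\langle \yy, \xx\rangle_{\Es^{*}\Es}}{\sqrt{\eps}}\right)\mu(\D\xx).
$$
Since $\mu$ is centred Gaussian with covariance bilinear form $C_\mu$, the integral equals $\exp(C_\mu(\yy,\yy)/(2\eps))$. Taking $\eps\log$ and optimising over $\yy$ yields the upper bound for compact $F$. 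To upgrade to closed sets I would establish exponential tightness directly from Fernique's theorem:
$$
\mu_\eps(\|\xx\|_\Es > R) \leq \exp\left(-\frac{\alpha R^2}{\eps}\right)\int_\Es \exp(\alpha\|\xx\|^2_\Es)\,\mu(\D\xx),
$$
which, combined with the goodness of $\Lambda^*_\mu$, closes the argument.

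The main obstacle is the lower bound, since no dual formula is directly available. Given an open $G \subset \Es$ and $\xx_0 \in G \cap \iota(\mathcal{H}_\mu)$, the target inequality
$$
\liminf_{\eps\downarrow 0}\eps \log \mu_\eps(G) \geq -\tfrac{1}{2}\|\xx_0\|^2_{\mathcal{H}_\mu}
$$
I would obtain from the Cameron--Martin translation formula: the law $\mu_\eps(\cdot - \xx_0)$ is absolutely continuous with respect to $\mu_\eps$, and its Radon--Nikodym density has the form
$$
\exp\left(\frac{1}{\eps^{1/2}}\,\bar{I}^{*}(\xx_0)(\xx) - \frac{1}{2\eps}\|\xx_0\|^2_{\mathcal{H}_\mu}\right),
$$
where $\bar{I}^{*}$ is the isometric embedding from Remark \ref{remarkInclusion}. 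Picking a symmetric neighbourhood $U$ of the origin with $\xx_0 + U \subset G$, the key step is a symmetrisation argument on the linear functional $\bar{I}^{*}(\xx_0)$ (which is centred Gaussian under $\mu_\eps$) combined with $\mu_\eps(U) \to 1$ as $\eps \downarrow 0$; these show that the linear term contributes asymptotically at most a factor bounded away from zero, leaving the quadratic term as the dominant exponential cost. Taking the infimum over $\xx_0 \in G \cap \iota(\mathcal{H}_\mu)$ and invoking density of the RKHS image in $\Es$ together with lower semicontinuity of $\Lambda^*_\mu$ promotes the inequality to $-\inf_{\xx \in G} \Lambda^*_\mu(\xx)$, completing the proof.
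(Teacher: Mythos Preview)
Your sketch is essentially correct, but it follows a genuinely different route from the paper's proof. The paper does \emph{not} argue via exponential Chebyshev and Cameron--Martin. Instead it observes that, by Gaussianity, the law of the empirical mean $\frac{1}{n}\sum_{k=1}^n X^k$ of an i.i.d.\ sample from~$\mu$ is exactly~$\mu_{1/n}$, so the LDP for the discrete sequence $(\mu_{1/n})_{n\ge 1}$ follows directly from the Banach-space Cram\'er theorem \cite[Theorem~3.3.11]{DS89}, whose exponential-moment hypothesis is precisely Lemma~\ref{DSLDPlemma}(1). The passage from the integer-indexed family to the continuous parameter~$\eps$ is then an interpolation: setting $n(\eps):=\lfloor 1/\eps\rfloor\vee 1$ and $\ell(\eps):=\eps\,n(\eps)\in[\tfrac12,1]$, one has $\mu_\eps = \mu_{1/n(\eps)}\circ(\ell(\eps)^{-1/2}\,\cdot\,)^{-1}$, and the dilation factor is absorbed using the quadratic homogeneity $\Lambda^*_\mu(a\,\cdot)=a^2\Lambda^*_\mu(\cdot)$ from Lemma~\ref{DSLDPlemma}(3). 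Your direct approach is the other standard proof and is more self-contained---it does not outsource the core estimate to an external Cram\'er-type result---but it obliges you to carry out the compact-to-closed upgrade and the Cameron--Martin symmetrisation explicitly (and you should double-check the $\eps$-scaling in your density: the linear term carries a factor~$\eps^{-1}$, not~$\eps^{-1/2}$, when written with respect to~$\mu_\eps$). The paper's route is shorter precisely because the heavy lifting is delegated; its only new ingredient is the discrete-to-continuous interpolation via scaling.
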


\begin{remark}\label{DSremark} 
Theorem~\ref{DSthmLDP} implies in particular that the standard Brownian motion 
$(W_t)_{t \ge 0}$ satisfies a LDP  on~$\RR$ with speed $t^{-1}$,
since $W_t$ and $ \sqrt{t} W_1$ are equal in law. 
\end{remark}

\begin{corollary}\label{Corollary: Z LDP} 
For any $t\in \Tt$, let $\nu_t$ be the law of $Z_t$ defined in~\eqref{eq:SDEZ}. 
Then the sequence $(\nu_t)_{t>0}$ satisfies a LDP on~$\RR$ 
with speed $t^{-\beta} $ and rate function
$\Lambda^{*}_{\mu}(x) := \frac{x^2}{2 \eta ^2}$, for $x \in \RR$. 
\end{corollary}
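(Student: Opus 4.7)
The proof is a direct application of Theorem~\ref{DSthmLDP} after recognising the correct Gaussian scaling of~$Z_t$. The plan is as follows.

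First, I would invoke Proposition~\ref{prop: (Z,B) properties} (specifically the diagonal entry of the covariance matrix) to observe that $Z_t$ is a centred Gaussian random variable with variance $\eta^2 t^{2\alpha+1} = \eta^2 t^\beta$. Consequently, by the self-similarity of Gaussian scaling, $Z_t$ is equal in law to $t^{\beta/2} Z_1$, where $Z_1 \sim \Nn(0,\eta^2)$. Setting $\eps := t^\beta$, this gives $Z_t \overset{\textrm{d}}{=} \eps^{1/2} Z_1$, which is precisely the scaling considered in Theorem~\ref{DSthmLDP} with $\Es = \RR$ and $\mu$ the law of $Z_1$.

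Next, I would identify the rate function explicitly. In this one-dimensional setting, $\Es^* \simeq \RR$ and the bilinear form $C_\mu$ is simply $C_\mu(y,y) = \EE(y Z_1)^2 = \eta^2 y^2$. A straightforward one-variable optimisation gives
$$
\Lambda^*_\mu(x)
= \sup_{y \in \RR}\left\{ y x - \frac{\eta^2 y^2}{2} \right\}
= \frac{x^2}{2 \eta^2},
$$
which is indeed a good rate function on~$\RR$ (conditions of Lemma~\ref{DSLDPlemma} being trivially satisfied for a one-dimensional centred Gaussian).

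Finally, I would apply Theorem~\ref{DSthmLDP} to conclude that the family $(\mu_\eps)_{\eps > 0}$, being the law of $\eps^{1/2}Z_1$, satisfies a LDP on~$\RR$ with speed $\eps^{-1}$ and rate function $\Lambda^*_\mu$. Re-expressing this in terms of~$t$ via $\eps = t^\beta$ (a monotone continuous bijection from $(0,\infty)$ to itself as~$t$ tends to zero), the speed~$\eps^{-1}$ becomes~$t^{-\beta}$, and one obtains the claimed LDP for $(\nu_t)_{t>0}$. The argument is essentially a change of variables on the speed, so there is no real obstacle; the only point that requires a line of justification is that the change of speed $\eps^{-1} \leftrightarrow t^{-\beta}$ does not alter the rate function, which is immediate because both sides of~\eqref{eqn:LDPdefinition} are invariant under any reparametrisation of~$\eps$ that tends to zero monotonically.
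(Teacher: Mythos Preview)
Your proof is correct and follows essentially the same route as the paper: both recognise that $Z_t \overset{\textrm{d}}{=} t^{\beta/2}Z_1$ with $Z_1\sim\Nn(0,\eta^2)$, identify $C_\mu(y,y)=\eta^2 y^2$, and then invoke Theorem~\ref{DSthmLDP}. The only cosmetic difference is that you perform the substitution $\eps=t^\beta$ explicitly to convert the speed $\eps^{-1}$ into $t^{-\beta}$, whereas the paper appeals to Remark~\ref{remark: DSthmLDP}, which packages exactly this reparametrisation.
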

\begin{proof}
Here, $\Es=\RR$ and $\langle u, v \rangle_{ \Es^{*}\Es}=uv$.
Since~$Z_t$ and~$t^{\beta/2}Z_1$ are equal in law, 
and 
$\int_{\RR}\E^{\I y x}\PP(Z_1 \in \D x) = \exp(-y^2 \eta^2/2)$,
taking $C_{\mu}(x,y) \equiv xy \eta^2$, the proof follows from Theorem~\ref{DSthmLDP} and Remark~\ref{remark: DSthmLDP}.
\end{proof}

The following two results will be essential for establishing a LDP for the rough Bergomi model. 
The first one, the contraction principle, states that continuous mappings preserve large deviations principles,
while the second one is a universal LDP result for general Gaussian measures on Banach spaces.

\begin{proposition}[Theorem 4.2.1. in~\cite{DZ10} \textbf{(Contraction Principle)}]
\label{contractionPrinciple}
Let $\Es$ and $\widetilde{ \Es}$ be two Hausdorff topological spaces 
and let $f:~ \Es~ \to ~\widetilde{ \Es }$ be a continuous mapping. 
Let $(\nu_{\eps})_{\eps\ge 0}$, $(\widetilde{\nu}_{\eps})_{\eps\ge 0}$ 
be two families of probability measures on $(\Es, \Bs(\Es) )$ and 
$(\widetilde{\Es}, \Bs( \widetilde{\Es}))$ respectively, 
such that $\widetilde{\nu}_{\eps}= \nu_{\eps} \circ f^{-1}$ for each $\eps>0$. 
If $(\nu_{\eps})_{{\eps}\ge 0}$ satisfies a LDP on~$\Es$ with rate function~$\Lambda$, 
then $\left( \widetilde{\nu}_{\eps} \right)_{ {\eps} \ge 0}$ satisfies a LDP on~$\widetilde{\Es}$ with rate function
$$
\widetilde{\Lambda}(\yy):= \inf \left\{\Lambda(\xx): \yy=f(\xx)\right\}.
$$
\end{proposition}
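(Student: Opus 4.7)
The plan is to transfer the LDP from $(\nu_\eps)_{\eps\geq 0}$ on~$\Es$ to $(\widetilde\nu_\eps)_{\eps\geq 0}$ on~$\widetilde\Es$ by pulling back open and closed sets under the continuous map~$f$, exploiting the pushforward relation $\widetilde\nu_\eps = \nu_\eps \circ f^{-1}$. The key algebraic identity I would first establish is
\begin{equation*}
\inf_{\xx \in f^{-1}(A)}\Lambda(\xx) = \inf_{\yy \in A}\widetilde\Lambda(\yy), \qquad \text{for any } A \subseteq \widetilde\Es,
\end{equation*}
with the convention that the infimum over the empty set equals $+\infty$. This follows straight from the definition of $\widetilde\Lambda$ by slicing $f^{-1}(A) = \bigcup_{\yy \in A} f^{-1}(\{\yy\})$ and taking the infimum of~$\Lambda$ fibrewise; no topology is used at this stage.

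Second, I would exploit continuity of~$f$: for any Borel set $\widetilde B \subseteq \widetilde\Es$, the preimage $f^{-1}(\widetilde B^\circ)$ is open and $f^{-1}(\overline{\widetilde B})$ is closed in~$\Es$, and together they sandwich $f^{-1}(\widetilde B)$, whence
\begin{equation*}
\nu_\eps\bigl(f^{-1}(\widetilde B^\circ)\bigr) \leq \widetilde\nu_\eps(\widetilde B) \leq \nu_\eps\bigl(f^{-1}(\overline{\widetilde B})\bigr).
\end{equation*}
Applying the LDP lower bound in~\eqref{eqn:LDPdefinition} for $(\nu_\eps)$ to the open set on the left, the matching upper bound to the closed set on the right, and then invoking the fibrewise identity from the first step to rewrite both extremal infima in terms of $\widetilde\Lambda$, delivers
\begin{equation*}
-\inf_{\yy \in \widetilde B^\circ}\widetilde\Lambda(\yy)
\leq \liminf_{\eps \downarrow 0}\eps\log\widetilde\nu_\eps(\widetilde B)
\leq \limsup_{\eps \downarrow 0}\eps\log\widetilde\nu_\eps(\widetilde B)
\leq -\inf_{\yy \in \overline{\widetilde B}}\widetilde\Lambda(\yy),
\end{equation*}
which is precisely the required LDP for $(\widetilde\nu_\eps)_{\eps \ge 0}$ on $\widetilde\Es$.

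The main obstacle is verifying that $\widetilde\Lambda$ is genuinely a rate function, that is, lower semi-continuous; the two LDP inequalities above go through regardless of this property. To handle it I would work under the mild additional hypothesis that $\Lambda$ has compact level sets, which is satisfied for all rate functions arising in Section~\ref{sec:GaussLDP} by virtue of the Gaussian LDP (Theorem~\ref{DSthmLDP}) combined with Fernique's theorem (statement~\eqref{item:fernique} of Lemma~\ref{DSLDPlemma}). Then, given $\yy_n \to \yy$ in $\widetilde\Es$, near-minimisers $\xx_n \in f^{-1}(\{\yy_n\})$ with $\Lambda(\xx_n) \leq \widetilde\Lambda(\yy_n) + 1/n$ lie in a common compact level set of~$\Lambda$, hence admit a subsequential limit $\xx_\infty$; continuity of~$f$ forces $f(\xx_\infty) = \yy$, and lower semi-continuity of~$\Lambda$ gives $\widetilde\Lambda(\yy) \leq \Lambda(\xx_\infty) \leq \liminf_n \widetilde\Lambda(\yy_n)$, closing the argument.
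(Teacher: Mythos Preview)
The paper does not supply its own proof of this proposition; it is merely quoted from Dembo--Zeitouni~\cite[Theorem~4.2.1]{DZ10}, so there is nothing in the paper to compare against. Your argument is precisely the standard one given in that reference: pull back open and closed sets via the continuous map~$f$, use the fibrewise identity $\inf_{\xx\in f^{-1}(A)}\Lambda(\xx)=\inf_{\yy\in A}\widetilde\Lambda(\yy)$, and read off the two LDP inequalities.

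Two minor remarks. First, you correctly flag that lower semi-continuity of~$\widetilde\Lambda$ requires~$\Lambda$ to be good (compact level sets); the paper's statement omits this qualifier, but Dembo--Zeitouni's actual Theorem~4.2.1 assumes it, and---as you observe---all rate functions appearing in Section~\ref{sec:GaussLDP} are good. A cleaner way to establish lower semi-continuity in a general Hausdorff space (where sequences may not suffice) is to note that the level set $\{\widetilde\Lambda\le c\}$ equals $f(\{\Lambda\le c\})$: one inclusion is immediate, and for the other the infimum defining~$\widetilde\Lambda(\yy)$ is attained on the closed fibre $f^{-1}(\{\yy\})$ since~$\Lambda$ is lsc with compact sublevel sets. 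Hence $\{\widetilde\Lambda\le c\}$ is the continuous image of a compact set, so compact, hence closed in the Hausdorff space~$\widetilde\Es$. This avoids the sequential argument and shows at once that~$\widetilde\Lambda$ is in fact a good rate function.
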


\begin{theorem}[Theorem 3.4.12 in~\cite{DS89}]\label{thm3.4.12}
Let $B$ be a $d$-dimensional Gaussian process, inducing a measure~$\mu$ 
on $(\Cc^d, \Bs(\Cc^d))$ with RKHS~$\mathcal{H}_{\mu}$. 
Then $ (\eps \mu)_{{\eps} \ge 0}$ satisfies a LDP with speed~$\eps^{-1}$ 
and rate function
\begin{equation*}
\Lambda_{\mu}^*(\xx) :=
\begin{cases}
\displaystyle \frac{1}{2}\|\xx\|_{\mathcal{H}_{\mu}}^2, & \text{if }\xx\in \mathcal{H}_{\mu},\\
+ \infty, & \text{otherwise.} 
\end{cases}
\end{equation*}
\end{theorem}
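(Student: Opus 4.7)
The plan is to combine Theorem \ref{DSthmLDP} with an intrinsic identification of the abstract Legendre-type rate function $\Lambda^{*}_{\mu}$ with the squared RKHS norm. Theorem \ref{DSthmLDP}, applied directly with $\Es=\Cc^d$ and the Gaussian measure $\mu$, already yields the LDP at speed $\eps^{-1}$ with rate function
$$\Lambda^{*}_{\mu}(\xx)=\sup_{\yy\in\Es^{*}}\left\{\langle \yy,\xx\rangle_{\Es^{*}\Es}-\tfrac{1}{2}C_{\mu}(\yy,\yy)\right\},$$
so the entire content of the theorem lies in rewriting this supremum in RKHS terms. Fernique's estimate (Lemma \ref{DSLDPlemma}\eqref{item:fernique}) guarantees that all the relevant integrals, and hence $C_{\mu}$, are finite, so the Legendre transform is well defined on $\Es$.

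The first step is to recast the quadratic term using the reproducing kernel structure. By Lemma \ref{DSLDPlemma}(2) and Remark \ref{remarkInclusion} (combined with the isometry $\mathcal{H}_{\mu}\simeq \mathcal{H}_{\mu}^{*}$), one has $C_{\mu}(\yy,\yy)=\EE_{\mu}[\langle \yy,\cdot\rangle_{\Es^{*}\Es}^{2}]=\|\iota^{*}\yy\|_{\mathcal{H}_{\mu}^{*}}^{2}$, where $\iota:\mathcal{H}_{\mu}\to\Es$ is the embedding from Definition \ref{defRKHS2} and $\iota^{*}$ its adjoint. For $\xx\in\mathcal{H}_{\mu}$, compatibility of the pairings yields $\langle \yy,\xx\rangle_{\Es^{*}\Es}=\langle \iota^{*}\yy,\xx\rangle_{\mathcal{H}_{\mu}^{*}\mathcal{H}_{\mu}}$, so the supremum becomes
$$\sup_{\yy\in\Es^{*}}\left\{\langle \iota^{*}\yy,\xx\rangle_{\mathcal{H}_{\mu}^{*}\mathcal{H}_{\mu}}-\tfrac{1}{2}\|\iota^{*}\yy\|_{\mathcal{H}_{\mu}^{*}}^{2}\right\}.$$
Since $\iota^{*}(\Es^{*})$ is dense in $\mathcal{H}_{\mu}^{*}$ (noted after Definition \ref{defRKHS}) and the functional in braces is continuous in $\yy^{*}=\iota^{*}\yy$ with respect to the $\mathcal{H}_{\mu}^{*}$-norm, the sup may be extended to all of $\mathcal{H}_{\mu}^{*}$. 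Standard Hilbert-space Fenchel duality (maximising a concave quadratic over a Hilbert space) then gives the value $\tfrac{1}{2}\|\xx\|_{\mathcal{H}_{\mu}}^{2}$.

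The main obstacle is showing that $\Lambda^{*}_{\mu}(\xx)=+\infty$ whenever $\xx\notin\mathcal{H}_{\mu}$. The crucial observation is the intrinsic characterisation of the RKHS: $\mathcal{H}_{\mu}$ is precisely the set of $\xx\in\Es$ for which the linear map $\iota^{*}\yy\mapsto\langle \yy,\xx\rangle_{\Es^{*}\Es}$, initially defined on the dense subspace $\iota^{*}(\Es^{*})\subset\mathcal{H}_{\mu}^{*}$, extends to a bounded functional on $\mathcal{H}_{\mu}^{*}$; this is essentially Riesz representation applied to the closure of $\iota^{*}(\Es^{*})$ inside $L^{2}(\Es,\mu)$ via Remark \ref{remarkInclusion}. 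If $\xx\notin\mathcal{H}_{\mu}$, this functional is unbounded, so one can select $(\yy_{n})\subset\Es^{*}$ with $\|\iota^{*}\yy_{n}\|_{\mathcal{H}_{\mu}^{*}}\le 1$ while $\langle \yy_{n},\xx\rangle_{\Es^{*}\Es}\to +\infty$. Testing $\yy=t\yy_{n}$ in the supremum and optimising over $t>0$ gives $\Lambda^{*}_{\mu}(\xx)\ge \tfrac{1}{2}\langle \yy_{n},\xx\rangle_{\Es^{*}\Es}^{2}$, which diverges. Lower semicontinuity of $\Lambda^{*}_{\mu}$ (automatic as a Legendre transform) then confirms it is a rate function in the sense of the earlier definition, completing the argument.
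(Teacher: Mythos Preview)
The paper does not actually prove Theorem~\ref{thm3.4.12}; it is quoted verbatim as an external result from Deuschel--Stroock~\cite[Theorem~3.4.12]{DS89} and used as a black box in the proof of Theorem~\ref{thm:rBergLDPthm}. There is therefore no ``paper's own proof'' to compare your attempt against.

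That said, your strategy of deducing Theorem~\ref{thm3.4.12} from Theorem~\ref{DSthmLDP} (which \emph{is} proved in the paper) by identifying the Legendre-type rate $\Lambda^{*}_{\mu}$ with $\tfrac{1}{2}\|\cdot\|_{\mathcal{H}_{\mu}}^{2}$ is sound and is essentially how the implication is obtained in~\cite{DS89}. The two halves of your identification are handled correctly: for $\xx\in\mathcal{H}_{\mu}$ you use density of $\iota^{*}(\Es^{*})$ in $\mathcal{H}_{\mu}^{*}$ and Hilbert-space Fenchel duality; for $\xx\notin\mathcal{H}_{\mu}$ you invoke the characterisation of $\mathcal{H}_{\mu}$ as those $\xx$ for which $\iota^{*}\yy\mapsto\langle \yy,\xx\rangle_{\Es^{*}\Es}$ is bounded on $\iota^{*}(\Es^{*})$, and then scale to force divergence. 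One small point worth making explicit is that this map is well defined on $\iota^{*}(\Es^{*})$ because $\iota^{*}$ is injective (which follows from the density of $\iota(\mathcal{H}_{\mu})$ in $\Es$ stipulated in Definition~\ref{defRKHS2}); without that, the ``unbounded functional'' argument would not even get started.
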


In order to deal with the stochastic differential equation~\eqref{eq:SDEXeps},
we have to consider the stochastic integral $\int_{0}^{\cdot} \sqrt{v^\eps_s}\D B^\eps_s$.
Assuming that the sequence $(\sqrt{v^\eps_s}, B^\eps_s)$ converges weakly as $\eps$ tends to zero
yields, under some conditions, a weak convergence for the stochastic integral~\cite{Jaku, Kurtz}.
However, in order to state a large deviations principle, we need a stronger result, proved by Garcia~\cite{Gar06}.
Before stating it (Theorem~\ref{GarciaLDPtheorem} below), though, 
we introduce the following class of sequences of stochastic processes:
\begin{definition}[Definition 1.1 in~\cite{Gar06}]\label{UETdef}
Let $\mathscr{U}$ denote the space of simple, real-valued, adapted processes~$Z$ 
such that $\sup_{t\ge 0} |Z_t|\le 1$.
A sequence of semi-martingales $(Y^{\eps})_{\eps\ge 0}$ is said to be uniformly exponentially tight (UET) if, 
for every $c, t>0$, there exists $K_{c,t}>0$ such that 
$$
\limsup_{\eps \downarrow 0} \eps \log \left(\sup_{Z \in \mathscr{U}} 
\PP\left( \sup_{s \le t} \left\vert \int_0^s Z_{u-} \D Y^{\eps}_u  \right\vert  \ge K_{c,t} \right) \right) \leq -c.
$$ 
\end{definition}

\begin{theorem}[Theorem 1.2 in~\cite{Gar06}]\label{GarciaLDPtheorem} 
Let $(X^{\eps})_{\eps\ge 0}$ be a sequence of adapted, c\`adl\`ag stochastic processes, 
and $(Y^{\eps})_{\eps\ge 0}$ a sequence of uniformly exponentially tight semi-martingales. 
If the sequence $((X^{\eps}, Y^{\eps}))_{\eps\ge 0}$ satisfies a LDP with rate function~$\Lambda$, 
then the sequence of stochastic integrals
$(X^{\eps}\cdot Y^{\eps})_{\eps\ge 0}$ 
satisfies a LDP with rate function 
$\widehat{\Lambda} (\varphi):= \inf \left\{\Lambda(\zxy): 
\varphi = \xx\cdot \yy, \yy\in \BV \right\}$.
\end{theorem}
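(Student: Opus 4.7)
The plan is to prove the theorem by exponential approximation, since the stochastic integration map $(x,y)\mapsto x\cdot y$ is not continuous on $\Cc^2$ and so Proposition~\ref{contractionPrinciple} cannot be applied directly. The strategy is to replace $X^{\eps}$ by a piecewise constant approximation $X^{\eps,n}$, for which $X^{\eps,n}\cdot Y^{\eps}$ reduces to a Riemann sum and hence is a continuous functional of the pair $(X^{\eps}, Y^{\eps})$, then use the UET property of $Y^{\eps}$ to pass to the limit $n\to\infty$ via the extended contraction principle (e.g. Dembo--Zeitouni, Theorem~4.2.23).

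For the first step, I would fix a refining sequence of partitions $0 = t_0^n < t_1^n < \cdots < t_n^n = 1$ with mesh tending to zero, and set $X^{\eps,n}_s := \sum_{k=0}^{n-1} X^{\eps}_{t_k^n} \ind_{[t_k^n, t_{k+1}^n)}(s)$, so that the process $s\mapsto(X^{\eps,n}\cdot Y^{\eps})(s) = \sum_{k: t_{k+1}^n \le s} X^{\eps}_{t_k^n}\bigl(Y^{\eps}_{t_{k+1}^n}-Y^{\eps}_{t_k^n}\bigr)$ is a continuous image of $(X^{\eps}, Y^{\eps})$ in the appropriate Skorokhod topology (the evaluation map at a fixed time being continuous at paths continuous at that time, which holds for the limit paths from the joint LDP). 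Proposition~\ref{contractionPrinciple} then yields, for each fixed $n$, a LDP for $(X^{\eps,n}\cdot Y^{\eps})_{\eps\ge 0}$ with rate function $\widehat{\Lambda}_n(\varphi) := \inf\{\Lambda(\zxy): \varphi = \sum_k x(t_k^n)(y(t_{k+1}^n) - y(t_k^n))\}$.

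The crux is the exponential equivalence step, namely proving
\begin{equation*}
\lim_{n\to\infty} \limsup_{\eps \downarrow 0} \eps \log \PP\Bigl(\sup_{s\le 1}\bigl|\bigl((X^{\eps}-X^{\eps,n})\cdot Y^{\eps}\bigr)(s)\bigr| > \delta\Bigr) = -\infty, \qquad \text{for every } \delta>0.
\end{equation*}
This is precisely the role of the UET assumption. Outside an event of exponentially small probability (controlled by the LDP for $X^{\eps}$ together with a uniform modulus-of-continuity estimate), the normalised process $Z^{\eps,n} := (X^{\eps}-X^{\eps,n})/\|X^{\eps}-X^{\eps,n}\|_{\infty}$ belongs to the class $\mathscr{U}$, so Definition~\ref{UETdef} provides a uniform exponential bound on $\sup_{s}|\int_0^s Z^{\eps,n}_{u-}\D Y^{\eps}_u|$; combining with the fact that $\|X^{\eps}-X^{\eps,n}\|_{\infty}\to 0$ as $n\to\infty$ on the same good event delivers the conclusion.

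With this approximation, the extended contraction principle yields a LDP for $(X^{\eps}\cdot Y^{\eps})_{\eps\ge 0}$ with rate function $\sup_{\delta>0}\liminf_{n} \inf_{\|\varphi'-\varphi\|<\delta}\widehat{\Lambda}_n(\varphi')$. It remains to identify this limit with $\widehat{\Lambda}(\varphi) = \inf\{\Lambda(\zxy): \varphi = x\cdot y,\ y\in\BV\}$. For $y\in\BV$ and $x\in\Cc$ the Riemann sums in $\widehat{\Lambda}_n$ converge to $\int_0^{\cdot} x\,\D y$, delivering one inequality; the restriction $y\in\BV$ in the limit rate function is natural because the discretised approximation of $\int x\,\D y$ converges only when $y$ has finite variation, so no minimiser outside $\BV$ can contribute. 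I expect the main obstacle to be the exponential equivalence step: the UET hypothesis is indispensable there, and some care is needed to choose the partition mesh adaptively so that $\|X^{\eps}-X^{\eps,n}\|_{\infty}$ is small with exponential probability uniformly in $\eps$.
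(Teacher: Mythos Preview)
The paper does not prove this theorem: it is stated as Theorem~1.2 of Garcia~\cite{Gar06} and invoked as a black box in the proof of Theorem~\ref{thm:rBergLDPthm}. There is therefore no ``paper's own proof'' to compare against.

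That said, your sketch is a faithful outline of the strategy Garcia actually uses in~\cite{Gar06}: discretise the integrand so that the stochastic integral becomes a continuous functional of the pair, obtain an LDP at each level~$n$ via the contraction principle, use the UET hypothesis to establish exponential approximation uniformly in~$\eps$, and conclude with the extended contraction principle (Dembo--Zeitouni, Theorem~4.2.23). Two points would need more care to turn this into a complete proof. First, the exponential equivalence argument as written is slightly circular: you invoke a modulus-of-continuity bound for $X^{\eps}$ to make $\|X^{\eps}-X^{\eps,n}\|_\infty$ small, but the LDP for $X^{\eps}$ alone does not immediately yield such a bound with the required uniformity; Garcia handles this through exponential tightness of $(X^\eps,Y^\eps)$ and a localisation on compact sets in the Skorokhod space. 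Second, the identification of the limiting rate function with $\widehat{\Lambda}$ is more delicate than you indicate: showing that no minimiser with $y\notin\BV$ contributes requires an argument that the discretised functionals diverge along any such pair, not just that the Riemann sums fail to converge.
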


\section{Proof of the main results}\label{Section: proof of main results}
\begin{proof}[Proof of Theorem~\ref{thm:RKHS1}]
Let Assumption~\ref{ass:Convolution} hold for a given function $\varphi\in\LTwo$.
The operator~$\Ii^{\varphi}$ in~\eqref{eq:IiOper}
is surjective on~$\mathscr{H}^{\varphi}$.
Let $f_1, f_2 \in \LTwo$ be such that $\Ii^{\varphi}f_1 = \Ii^{\varphi}f_2$. 
Then $\int_{0}^{t} \varphi(u,t)[f_1(u) - f_2(u)]\D u=0$ for any $t\in\Tt$. 
Titchmarsh's convolution theorem~\cite[Theorem VII]{Tit26} then implies that $f_1=f_2$ almost everywhere,
so that ~$\Ii^{\varphi}$ is a bijection.
The linearity  of~$\Ii^{\varphi}$ implies that 
$\langle \Ii^{\varphi}f_1, \Ii^{\varphi}f_2\rangle_{\mathscr{H}^{\varphi}}:= \langle f_1, f_2 \rangle_{\LTwo}$
defines an inner product on~$\mathscr{H}^{\varphi}$, and hence 
$(\mathscr{H}^{\varphi}, \langle \cdot, \cdot \rangle_{\mathscr{H}^{\varphi}})$ 
is a real inner product space.
In order for~$\mathscr{H}^{\varphi}$ to satisfy Definition~\ref{defRKHS2}, 
we first need to show that it is a separable Hilbert space.
Let $\{ f_n \}_{n\in\NN}$ be a sequence of functions in~$\LTwo$ such that
$\{\Ii^{\varphi} f_n\} _{n \in \mathbb{N} }$ converges to~$\Ii^{\varphi}f$ in $\mathscr{H}^{\varphi}$.
Therefore
$\|\Ii^{\varphi}f_n - \Ii^{\varphi}f_m\|_{\mathscr{H}^{\varphi}} = \|f_n -f_m\|_{\LTwo}$ 
tends to zero as~$n$ and~$m$ tend to infinity.
Since~$\LTwo$ is a complete (Hilbert) space, there exists a function $\widetilde{f} \in \LTwo$
such that the sequence $\{ f_n \}_{n \in \mathbb{N} } $ converges to~$\widetilde{f}$. 
Assume for a contradiction that $f \neq \widetilde{f}$, then, since~$\Ii^{\varphi}$ is a bijection,
the triangle inequality yields
$$
0 < \left\|\Ii^{\varphi}f - \Ii^{\varphi}\widetilde{f} \right\|_{\mathscr{H}^{\varphi}}
\le\left\|\Ii^{\varphi}f - \Ii^{\varphi}f_n \right\|_{\mathscr{H}^{\varphi}}
 +\left\|\Ii^{\varphi}\widetilde{f} - \Ii^{\varphi}f_n\right\|_{\mathscr{H}^{\varphi}},
$$
which converges to zero as~$n$ tends to infinity.
Therefore $f=\widetilde{f}$, $\Ii^{\varphi}f \in\mathscr{H}^{\varphi}$ and $\mathscr{H}^{\varphi}$ is complete, 
hence a real Hilbert space. 
Since~$\LTwo$ is separable with countable orthonormal basis $\{\phi_n\}_{n \in \mathbb{N}}$,
then $\{\Ii^{\varphi} \phi_n\}_{n \in \mathbb{N}}$ is an orthonormal basis for~$\mathscr{H}^{\varphi}$, 
which is then separable.\\
We now find a dense embedding $I: \mathscr{H}^{\varphi} \to \Cc$ as in Definition~\ref{defRKHS2}.
Since $\mathscr{H}^{\varphi} \subset \Cc$, take the embedding to be the inclusion map $I=\iota$. 
By~\cite[Lemma 2.1]{Che08}, the conditions on~$\phi$ in Assumption~\ref{ass:Convolution}
imply that~$\mathscr{H}^{\varphi}$ is dense in~$\Cc$.
Finally, for $f^*\in\Cc^*$, the measure~$\mu$ induced by the process~$\int_{0}^{\cdot}\varphi(u,\cdot)\D W_s$ is a Gaussian probability measure on 
$(\Es, \Bs(\Es))$, 
and $f^*$ is a centred, real Gaussian random variable on $(\Es,\Bs(\Es),\mu)$ 
by Definition~\ref{defGaussian} . 
In turn, Remark~\ref{remarkInclusion} implies that~$I^*$, the dual of~$I$,
admits an isometric embedding~$\bar{I}^*$ such that
$\|\bar{I}^* f^*\|_{(\mathscr{H}^{\varphi})^{*}}^2
 = \| f^*\|_{ L^2(\Es,\mu) }^2 = \int_{\Es }(f^*)^2 \D \mu = \VV(f^*)$,
hence $\mathscr{H}^{\varphi}$ is the RKHS of~$\mu$. 
\end{proof}

\begin{proof}[Proof of Theorem~\ref{thm:RKHS2}]  
We proceed in a similar manner to the proof of Theorem~\ref{thm:RKHS1}.
Let $\varphi_1, \varphi_2$ satisfy Assumption~\ref{ass:Convolution}.
Clearly the operator~$\Ii^{\varphi_1}_{\varphi_2}$ in~\eqref{eq:IiOper} is surjective 
on $\mathscr{H}^{\varphi_1}_{\varphi_2} \subset \Cc^2$. 
By Titchmarsh's convolution Theorem~\cite[Theorem VII]{Tit26}, 
if $\Ii^{\varphi_1}_{\varphi_2} f_1 = \Ii^{\varphi_1}_{\varphi_2} f_2$ 
on~$\Tt$, then $f_1=f_2$ and~$\Ii^{\varphi_1}_{\varphi_2}$ is a bijection.
Furthermore 
$\langle \Ii^{\varphi_1}_{\varphi_2}f_1, \Ii^{\varphi_1}_{\varphi_2}f_2 \rangle_{\mathscr{H}^{\varphi_1}_{\varphi_2}} := \langle f_1, f_2 \rangle_{\LTwo}$ 
is a well-defined inner product and, following the proof of Theorem~\ref{thm:RKHS1},
$(\mathscr{H}^{\varphi_1}_{\varphi_2}, \langle \cdot, \cdot \rangle_{\mathscr{H}^{\varphi_1}_{\varphi_2}} )$ 
is a real, separable Hilbert space.
To find a dense embedding $I: \mathscr{H}^{\varphi_1}_{\varphi_2} \to \Cc^2$, 
take~$I$ as the inclusion map~$\iota$; then the conditions on~$\phi_1,\phi_2$ in Assumption~\ref{ass:Convolution} imply that~$\mathscr{H}^{\varphi_1}_{\varphi_2}$ is dense in~$\Cc^2$ by~\cite[Lemma 2.1]{Che08}.
Finally, $f^{*} \in (\Cc ^{2})^* $ is a real, centred Gaussian random variable on
$(\Cc^2, \Bs(\Cc ^2 ), \mu)$, where~$\mu$ denotes the measure induced by~$(Y^1,Y^2)$, and 
$
\VV(f^*)= \int_{\Cc^2 } (f^*)^2 \D \mu_2 = \| f^{*}\|_{L^2(\Cc^2, \mu_2)}^2
 = \|\iota^* f^*\|_{(\mathscr{H}^{\varphi_1}_{\varphi_2})^{*}}^2$,
so that by Definition~\ref{defRKHS2}, $\mathscr{H}^{\varphi_1}_{\varphi_2}$ is the RKHS of the measure induced by~$(Y^1,Y^2)$ on~$\Cc^2$.
\end{proof}

\begin{proof}[Proof of Theorem~\ref{thm:rBergLDPthm}] 
Let $(Z^{\eps},B^{\eps})$ be as in~\eqref{def: rescaled rBerg processes}.
From Theorem~\ref{thm3.4.12} and Corollary~\ref{cor:RKHS2} 
the sequence $((Z^\eps, B^\eps))_{\eps\ge 0}$ 
satisfies a LDP with speed $\eps^{-\beta}$ and rate function
(with $\mathscr{H}^{K_\alpha}_{\rho}$ given in Corollary~\ref{cor:RKHS2})
\begin{equation*}
\Lambda^{*}(\zxy)
 = 
\begin{cases}
\displaystyle \frac{1}{2} \left\|\zxy\right\|^2_{\mathscr{H}^{K_\alpha}_{\rho}}, & \text{if }
\zxy\in \mathscr{H}^{K_\alpha}_{\rho}, \\
+ \infty, & \text{otherwise}.
\end{cases}
\end{equation*} 
Pathwise, we may view the map
$t \mapsto (Z^{\eps}_t, B^{\eps}_t)^\top$ 
as an element of $\Cc^2$, and write 
$$\begin{pmatrix}
v^{\eps}_t \\
B^{\eps}_t
\end{pmatrix}
 =  \Mm\begin{pmatrix}Z^{\eps}\\ B^{\eps}\end{pmatrix}(t, \eps).$$
 
We first verify that the operator~$\Mm$ in~\eqref{def: M operator} is continuous 
with respect to the $\Cc(\Tt \times \RR_+, \RR_{+} \times \RR)$ 
norm~$\|\cdot\|_{\infty }$. 
For any $(f, g)^\top\in \Cc^2$, introduce a small perturbation $(\delta^f, \delta^g) \in \Cc^2$.
Then
\begin{align*}
\left\|\Mm
\begin{pmatrix}
f+\delta^f \\ 
g + \delta^g
\end{pmatrix}
 - \Mm \begin{pmatrix}
f \\ 
g 
\end{pmatrix}\right\|_{\infty} 
& = \sup_{ t \in \Tt, \eps>0} \left\{
\Big|(\mm (f+\delta^f))(t,\eps) - (\mm f)(t,\eps)\Big| + |\delta^g(t)|\right\}\\
& \leq \sup_{ t\in\Tt, \eps>0}\left\{v_0^{1+\beta} \exp\left(-\frac{\eta^2}{2} (\eps t)^{\beta }\right)
\left|\E^{f(t)}\right| \left|\E^{\delta^f(t)} - 1\right|\right\} + \sup_{t \in \Tt }|\delta^g(t)| \\
& \leq C\sup_{t \in\Tt} \left|\E^{\delta^f(t)} - 1\right| + \sup_{t \in \Tt }|\delta^g(t)|,
\end{align*} 
for some strictly positive constant~$C$.
The right-hand side clearly tends to zero as $(\delta^f, \delta^g)$ 
tends to zero with respect to the sup norm on~$\Cc^2$, and hence~$\Mm$ is a continuous operator.
The \blue{c}ontraction \blue{p}rinciple (Proposition~\ref{contractionPrinciple}) therefore implies that the sequence 
$((v^{\eps}, B^{\eps}))_{\eps\ge 0}$ satisfies a LDP on $\Cc(\Tt \times \RR_+, \RR_{+} \times \RR)$, 
with speed~$\eps^{-\beta}$ and rate function~$\Lambda$.
Since~$\Mm$ is clearly a bijection, the rate function~$\Lambda$ 
may then be expressed as $\Lambda(\zxyO)= \Lambda^{*}\left(\Mm^{-1}(\zxyO)\right)$,
for any $(x_1,y_1) \in \Cc^2$.

In the second step we will apply Theorem~\ref{GarciaLDPtheorem} to prove that
the sequence of stochastic integrals
$(\II(v^{\eps},B^{\eps})(\cdot))_{\eps \ge 0 } := (\int_0^{\cdot} \sqrt{v^{\eps}_s}\D B^{\eps}_s)_{\eps \ge 0 }$ 
satisfies a LDP. 
Since $B^\eps = \eps^{\alpha+1/2}B$ by~\eqref{eq:SDEXeps}, we can write the stochastic integral
$\II(v^{\eps},B^{\eps})(\cdot) = \II(\eps^{2 \alpha} v^{\eps},\sqrt{\eps}B)(\cdot)$, which holds almost surely, 
and therefore~\cite[Example 2.1]{Gar06} the sequence of (semi)-martingales $(\sqrt{\eps} B)_{\eps \ge 0}$ 
is UET in the sense of Definition~\ref{UETdef}. 
Since the sequence $(\sqrt{\eps^{2 \alpha} v^{\eps}})_{\eps\ge 0}$ consists of c\`adl\`ag, 
$(\Ff_t)$-adapted processes, 
Theorem~\ref{GarciaLDPtheorem} implies that the sequence of stochastic integrals
$( \II(v^{\eps},B^{\eps})(\cdot))_{\eps\geq 0}$ 
satisfies a LDP with speed~$\eps^{-\beta}$ and rate function 
\begin{equation*}
\Lambda^X(z)= \inf \{
\Lambda(\zxy),  z=\II(x,y) \text{ and }  y\in\BV\cap\Cc \}.
\end{equation*}

The final step is to prove the LDP for
$X^{\eps} = \int_0^{\cdot} \sqrt{v^{\eps}_s}\D B^{\eps}_s - \frac{1}{2}\int_0^{\cdot} v^{\eps}_s \D s$.
To do this we show that the sequences $(X^{\eps})_{\eps \ge 0}$ and $(\II(v^{\eps},B^{\eps}))_{\eps \ge 0} $ are exponentially equivalent. 
For any $\delta > 0 $ it follows that
$$
\PP \left( \sup_{t \in [0,1]}\vert X^{\eps}_t - \II(v^{\eps},B^{\eps})(t)  \vert > \delta \right)  \le 
\PP \left( \int_0^1 v^{\eps}_s \D s > \delta \right) 
 \le 
\PP \left( \int_0^1 \exp(Z^{\eps}_s) \D s > b_{\eps} \right),
$$
where $b_{\eps}:= \delta / v_0\eps^{1+\beta}$.
Using that 
$\int_0^1 \exp( Z^\eps_s) \D s 
\le \exp( \sup_{t \in [0,1]} Z^\eps_t )
$
almost surely,
it follows that 
$$ \PP \left( \int_0^1 \exp( Z^\eps_s ) \D s > b_{\eps} \right)
\le 
\PP \left(  \sup_{t\in [0,1]}Z^\eps_t > \log b_{\eps} \right)
=
\PP \left( \sup_{t \in [0,1]} Z_t > \frac{\log b_{\eps} }{ \eps^{\beta/2}} \right). 
$$
The process $ (Z_t)_{t \in [0,1]} $ is almost surely bounded~\cite[Theorem 1.5.4]{AT07}, 
and 
so we may apply the Borell-TIS inequality; 
a consequence of which~\cite[Theorem 2.1.1 and discussion below]{AT07}, 
implies that 
$$
\PP \left( \sup_{t \in [0,1]} Z_t > \frac{\log b_{\eps} }{ \eps^{\beta/2}} \right)
\le 
\exp \left\{ -\frac{1}{2}\left( \frac{\log b_{\eps} }{ \eps^{\beta/2}} - \EE\left(\sup_{t\in [0,1]}Z_t\right) \right)^2 \right\}.
$$
This then implies that 
$$ \eps^{\beta} \log \PP \left( \int_0^1 \exp( Z_s) \D s > b_{\eps} \right)
\le 
\eps^{\beta} \left\{
- \frac{(\log b_{\eps})^2}{2 \eps^{\beta}}
+ \frac{\log b_{\eps} }{ \eps^{\beta/2}}\EE\left(\sup_{t\in [0,1]}Z_t\right)
- \frac{1}{2} \EE\left(\sup_{t\in [0,1]}Z_t\right)^2
\right\}.
$$
Note that 
$ \eps^{\beta /2} \log b_{\eps}$ converges to zero
as~$\eps$ tends to zero, which in turn implies that
$$ \limsup_{\eps \downarrow 0}  \eps^{\beta /2} \log b_{\eps} \EE\left(\sup_{t\in [0,1]}Z_t\right)=0.
$$
Similarly, $ \limsup_{\eps \downarrow 0} \eps^{\beta} \EE\left(\sup_{t\in [0,1]}Z_t\right)^2=0.
$
Furthermore, it follows that
$$ \limsup_{\eps \downarrow 0} \eps^{\beta} \left( - \frac{(\log b_{\eps})^2}{2  \eps^{\beta}}  \right) = -\infty,
$$
and therefore
$$ \limsup_{\eps \downarrow 0} \eps^{\beta} \log \PP 
\left( \sup_{t \in [0,1]}\vert X^{\eps}_t - \II(v^{\eps},B^{\eps})(t)  \vert > \delta \right) = -\infty,
$$
which is precisely the definition of exponential equivalence~\cite[Definition 4.2.10]{DZ10}. 
Then, by~\cite[Theorem 4.2.13]{DZ10}, the sequence $(X^{\eps})_{\eps \ge 0}$ satisfies a LDP with speed $\eps^{-\beta} $ and rate function $\Lambda^X$.
\end{proof}

\begin{proof}[Proof of Theorem ~\ref{DSthmLDP}]  
Let $\underline{X}:=(X^1,\ldots,X^n)$ be an $n$-dimensional random vector taking values on~$\mathscr{E}^n$, where each~$X^k$ has distribution~$\mu$, so that the average $\frac{1}{n} \sum_{k=1}^n X^k$ 
has distribution~$\mu_{1/n}$.
Lemma~\ref{DSLDPlemma} implies that
$\int_{\Es} \exp(\alpha \| x\|^2_{\Es})\mu_{1/n}(\D x)$ is finite
for some $\alpha>0$, and~\cite[Theorem 3.3.11]{DS89} yields a LDP for the sequence
$(\mu_{1/n})_{n \ge 1}$, with rate function~$\Lambda^{*}_{\mu}$.
Define now $n({\eps}):= \left\lfloor \frac{1}{{\eps}} \right\rfloor \vee 1$ and $ \ell({\eps}):= {\eps} n({\eps})$ for ${\eps}>0$, noting that 
$\ell (\eps) \in [1-\eps, 1)$ for $\eps \in (0,1/2)$ and 
in~$[\frac{1}{2}, 1]$ for $\eps \in [1/2,1]$; 
for a Gaussian random variable~$X$ with distribution~$\mu_{1/n({\eps})}$, it follows that $ \ell({\eps})^{1/2}X$ has distribution
$\mu_{\eps}$.
For a closed subset~$B$ of~$\Es$, we define the dilated set
$\widetilde{B}:=\left\{ \ell^{-1/2}x: \textrm{ for all } \ell \in \left[ \frac{1}{2},1 \right], \: x\in B \right\}$, 
so that
\begin{align*}
\limsup_{\eps \downarrow 0} {\eps} \log \mu_{\eps}(B)
 & = \limsup_{\eps \downarrow 0} \frac{\ell(\eps)}{n(\eps)}
 \log\mu_{1/n(\eps)}\left(\ell({\eps})^{-1/2}B\right)\\
& \le \limsup_{\eps\downarrow 0} \frac{1}{n(\eps)} \log \mu_{1/n(\eps)}( \widetilde{B})
 = \limsup_{n \uparrow \infty} \frac{1}{n} \log \mu_{1/n}(\widetilde{B})
 \le - \inf_{ x\in \widetilde{B}} \Lambda^*_{\mu}(x).
\end{align*}
The large deviations upper bound then follows from the obvious equalities
$$
\inf_{x \in \widetilde{B}} \Lambda^{*}_{\mu}(x)
=\inf_{ \ell \in \left[\frac{1}{2},1\right]} \inf_{x \in B} \Lambda^{*}_{\mu}( \ell ^{-1/2} x) 
= \inf_{ \ell \in \left[\frac{1}{2},1\right]} \ell^{-1} \inf_{x \in B} 
\Lambda^{*}_{\mu}(x)
= \inf_{x \in B} \Lambda^{*}_{\mu}(x).
$$ 
Now for any $x$ in any open set $C \subset \Es$, 
we can find an open neighbourhood~$\Oo_x$ such that $\Oo_x \subseteq \ell({\eps})^{-1/2}C$ 
for all $0<{\eps}<{\eps}_0$ with ${\eps}_0 \in \left(0, \frac{1}{2} \right]$. 
The large deviations lower bound then follows from the inequalities
\begin{align*}
\liminf_{\eps \downarrow 0} \eps \log \mu_{\eps}(C)
 & = \liminf_{\eps \downarrow 0} \frac{\ell(\eps)}{n(\eps)} 
 \log\mu_{\frac{1}{n(\eps)}}\left(\ell(\eps)^{-1/2} C\right)
 \ge  \liminf_{n \uparrow \infty} \frac{1}{n} \log \mu_{\frac{1}{n}}(\Oo_x)
 \ge -\inf_{y \in {\Oo_x}} \Lambda^{*} _{\mu}(y)
 \ge - \Lambda^{*}_{\mu}(x).
\end{align*}
\end{proof}

\begin{remark}\label{remark: DSthmLDP}
The proof of Theorem~\ref{DSthmLDP} stills holds for the case where $t^{\beta/2}X \sim \mu_t$ with speed $t^{-\beta}  $, and the proof can be easily adapted to confirm this case.
\end{remark}

\section{Large deviations for the uncorrelated Rough Bergomi model}\label{sec: uncorrel}

We treat here the special case of~\eqref{rBergomi}, 
where the Brownian motions~$W$ and~$B$ are independent ($\rho=0$). 
Following similar arguments to Corollary~\ref{cor:RKHS2}, 
and mimicking~\eqref{eq:IiOper},
we introduce the $\LTwo $ operator $\Ii^0 $ as 
$$
\Ii^0 (f_1,f_2) :=\begin{pmatrix}
\displaystyle \Ii^{K_\alpha} f_1\\
\displaystyle \Ii^1 f_2
\end{pmatrix},
\qquad\text{for any }f_1,f_2\in\LTwo,
$$
so that the RKHS (on $\Cc^2$) 
of the measure induced by~$(Z,B)$ is
$\mathscr{H}:= \left\{\Ii^0(f_1,f_2): f_1, f_2 \in \LTwo\right\}$,
with inner product 
$\left\langle
\Ii^0(f_1,f_2),
\Ii^0(g_1,g_2)\right\rangle_{\mathscr{H}}
 := \langle f_1, g_1 \rangle_{\LTwo} + \langle f_2, g_2 \rangle_{\LTwo}$,
for any $f_1, f_2, g_1, g_2 \in \LTwo$.
Similarly to Theorem~\ref{thm:rBergLDPthm},
\cite[Theorem 3.4.12]{DS89} yields a LDP on~$\Cc^2$ for $((Z^\eps, B^{\eps}))_{\eps\ge 0}$ 
with speed~$\eps^{-\beta}$ and rate function 
\begin{equation*}
\Lambda(\zxy):= 
\left\{
\begin{array}{ll}
\displaystyle \frac{1}{2} \left\|\zxy\right\|^2_{\mathscr{H}}, & \text{if} (x,y)^{\top} \in \mathscr{H}, \\
+ \infty, & \text{otherwise}.
\end{array}
\right.
\end{equation*}
This in turn yields a LDP for $((v^{\eps}, B^{\eps}))_{\eps \ge 0}$ in~\eqref{def: rescaled rBerg processes}
on~$\Cc^2$ with speed~$\eps^{-\beta}$ and rate function
$\widetilde{\Lambda}(\zxy) := 
\inf \left\{\Lambda(\zxyS): \zxy = \Mm\zxyS\right\}$,
where the operator~$\Mm$ is defined in~\eqref{def: M operator}.
In the same vein as Theorem~\ref{thm:rBergLDPthm}, Theorem~\ref{GarciaLDPtheorem} yields a LDP for
$(\int_0^{\cdot} \sqrt{v^{\eps}_s} \D B^{\eps}_s )_{\eps\ge 0}$
on~$\Cc$ with speed~$\eps^{-\beta}$ 
and rate function~$\widehat{\Lambda}^X$, defined as 
\begin{align*}
\widehat{\Lambda}^X(\varphi)
 & := \inf \left\{\widetilde{\Lambda}(\zxy) : \varphi = x \cdot y, y\in\BV\cap\Cc\right\}
 = \inf \left\{\Lambda(\zxyS) : \varphi= x \cdot y,
 \zxy = \Mm\zxyS, x^*, y^* \in \mathscr{H}\right\}\\
 & = \inf \left\{ \Lambda(\zxy): \varphi= x \cdot y, 
\zxy = 
 \Mm(\Ii^0 (f_1,f_2)), 
f_1, f_2 \in \LTwo\right\}\\
 & = \inf_{f_1, f_2 \in \LTwo} 
 \left\{ 
 \frac{1}{2} \| f_1 \|^2_{\LTwo} + \frac{1}{2} \| f_2 \|^2_{\LTwo}:  
\varphi = \int_0^{\cdot } \sqrt{\mm\left((\Ii^{K_\alpha}f_1)(s) \right)} f_2(s) \D s 
\right\}. 
\end{align*}
with~$\mm$ introduced in~\eqref{eq:M1}.
Following \blue{an} argument \blue{identical} to \blue{the one} presented in \blue{the proof of} Theorem \ref{thm:rBergLDPthm}, 
we conclude that~$(X^{\eps})_{\eps >0}$ satisfies a LDP with speed~$\eps^{-\beta}$ 
and rate function~$\widehat{\Lambda}^X$.


\end{document}